\newcommand{\lsection}[2][""]{%
    \ifthenelse{\equal{#1}{""}}{%
        \section{#2}
    }{%
        \renewcommand{\sectionmark}[1]{\markright{\thesection.\ \MakeUppercase{#1}}}
        \section{#2}
        \renewcommand{\sectionmark}[1]{\markright{\thesection.\ \MakeUppercase{##1}}}
    }
}
\newcommand{\lchapter}[2][""]{%
    \ifthenelse{\equal{#1}{""}}{%
        \chapter{#2}
    }{%
        \renewcommand{\chaptermark}[1]{\markboth{\MakeUppercase{\chaptername\ \thechapter.\ #1}}{}}
        \chapter{#2}
        \renewcommand{\chaptermark}[1]{\markboth{\MakeUppercase{\chaptername\ \thechapter.\ ##1}}{}}
    }
}
\def\R{\mathbb R}
\def\C{\mathbb C}
\def\P{\mathbb P}
\def\iff{if and only if}
\def\mfd{manifold}
\def\fcn{function}
\def\h{holomorphic}
\def\r{respectively}
\def\st{such that}
\def\(#1_#2){(#1_1,#1_2,\dots,#1_#2)}
\def\p #1_#2{#1_1#1_2\dots#1_#2}
\def\s#1_#2{#1_1+#1_2+\dots+#1_#2}
\def\wrt{with respect to}
\def\iso{isomorphism}
\def\ra{\rightarrow}
\def\lra{\longrightarrow}
\def\op{\operatorname}
\def\vb{vector bundle}
\def\ssl{\smallsmile}
\def\bp{\bar\partial}
\def\ssm{\hspace{-.5mm}\smallsetminus\hspace{-.5mm}}
\def\nbd{neighborhood}
\def\F{\mathcal F}
\def\U{\mathcal U}
\def\O{\mathcal O}
\def\cS{\mathcal S}
\def\cE{\mathcal E}
\def\cK{\mathcal K}
\def\a{\alpha}
\def\g{\gamma}
\def\na{\nabla}
\def\o{\omega}
\def\l{\lambda}
\def\t{\theta}
\def\z{\zeta}
\def\s{\sigma}
\def\vt{\vartheta}
\def\de{\partial}
\def\dbar{\bar\partial}
\def\dbarde{\bar\partial\partial}
\def\dbard{\bar\partial\partial}
\def\moins{\ssm}
\def\vv {\vskip.2cm}
\newtheorem{theorem}{Theorem}[section]
\newtheorem{lemma}[theorem]{Lemma}
\newtheorem{corollary}[theorem]{Corollary}
\newtheorem{proposition}[theorem]{Proposition}
\newtheorem{exa}[theorem]{Example}
\newenvironment{example}{\begin{exa} \em}{\end{exa}}
\newtheorem{exas}[theorem]{Examples}
\newtheorem{prope}[theorem]{Property}
\newtheorem{defini}[theorem]{Definition}
\newenvironment{definition}{\begin{defini} \em}{\end{defini}}
\newtheorem{rema}[theorem]{Remark}
\newenvironment{remark}{\begin{rema} \em}{\end{rema}}
\newenvironment{equationth}{\stepcounter{theorem}\begin{equation}}{\end{equation}}
\newenvironment{proof}{{\noindent \sc Proof: } }{\mbox{ }\hfill$\Box$  
                        \vspace{1.5ex} \par}
\title {\bf  \Large {Localization of Bott-Chern classes and\\ Hermitian residues}}
\author{Maur\'icio Corr\^ea Jr\thanks{Supported by  CAPES, CNPq and Fapesp}  and Tatsuo Suwa\thanks{Supported by JSPS grants no. 24540060 and no. 16K05116}
}
\date{}
\begin{document}

\pagestyle{plain}

\maketitle

\paragraph{Abstract\,:} 
We develop a theory of \v{C}ech-Bott-Chern cohomology and  in this context we naturally come up with the relative Bott-Chern cohomology. In fact Bott-Chern cohomology has two relatives and they all arise from a single complex.
Thus we study these three cohomologies in a unified way and obtain  a long exact sequence involving the three. We then study the localization problem
of characteristic classes in the relative Bott-Chern cohomology. For this we define the cup product and integration  in our framework and we discuss local and global duality morphisms. After reviewing some materials on connections, we give a vanishing theorem relevant to our localization. With these,  we prove a residue theorem for 
 vector bundles admitting a Hermitian connection compatible with an action of the non-singular part of a singular distribution. As a typical case, we discuss the  action of a distribution on the normal bundle of an invariant sub\mfd\ (the so-called
 Camacho-Sad action) and give a specific  example.
\bigskip

\noindent
{\it Mathematics Subject Classification} (2010)\,: Primary 32A27, 32C35, 32S65, 53C56; Secondary 14C30, 53B35, 53C05, 57R20.
\vv

\noindent
{\it Keywords}\,:  Relative Bott-Chern cohomology, local duality morphism, metric connection, Bott type vanishing theorem,
localization and Hermitian residue, singular \h\ distribution.

\lsection{Introduction}

\v{C}ech-de~Rham cohomology, particularly the relative version, together with its integration theory has been
extensively used in the localization problem of characteristic classes. It started with the residue problem of singular \h\ foliations
(cf. \cite{Leh3}, \cite{LS}, \cite{Sw1}) and the theory was then transferred to the fixed point theory of discrete dynamics
(cf. \cite{ABT}). The philosophy behind is rather simple. Namely, once we have some kind of vanishing theorem on the 
non-singular part, certain characteristic classes are localized at the set of singular points and the localization gives rise to residues and the residue theorem via the Alexander duality. The relative easiness of  computing the residues  is another advantage of this method.
The idea and the techniques turned out to be effective in many other problems,
 characteristic classes of singular varieties and localized intersection theory, to name a few (cf. \cite{BSS}, \cite{Su7}).
A similar theory is developed for the Dolbeault complex, the relevant characteristic classes in this case being the Atiyah classes
(cf. \cite{ABST}, \cite{Sw2}, \cite{Sw3}, \cite{Sw4}).

In this paper we study the localization problem in Bott-Chern cohomology. The cohomology, which was introduced in \cite{BC},
refines both de~Rham and Dolbeault cohomologies. It is a powerful tool in the study of non-K\"ahler \mfd s (cf.
\cite{ADT}, \cite{AT} and references therin) and is also related to arithmetic characteristic classes (cf. \cite{BKK}, \cite{GS1}).
We first develop a theory of \v{C}ech-Bott-Chern cohomology and the relative Bott-Chern cohomology naturally arises in this context. In fact  Bott-Chern cohomology has some relatives, i.e., Aeppli cohomology
and one more, and they all come from a single complex (cf. (\ref{mixed}) below). 
This viewpoint allows us to deal with the three types of cohomologies in a unified way and leads to a long exact sequence
involving the three   (Theorem \ref{exactmixed}). As to the local and global dualities, we are mainly interested in the
one between Bott-Chern and Aeppli. For this  the cup product is defined on the cochain level and the integration is defined via \v{C}ech-de~Rham cohomology.

In the de~Rham case, the usual Chern-Weil theory of characteristic classes fits nicely into the 
theory with some modifications
and, in the Dolbeault case, a similar strategy works by considering connections of type $(1,0)$.
 In the Bott-Chern case we impose one more condition on the connections, i.e.,
we require  the connections to be Hermitian. We then define characteristic classes in the \v{C}ech-Bott-Chern cohomology using the
Bott-Chern 
form. The relevant vanishing theorem in our case is the one proved in \cite{ABST} together with
the Hermitian condition (Corollary \ref{vanherm}).
With these at hand,  we have a general residue theorem for a \h\ \vb\ admitting a Hermitian connection compatible
with an action of the non-singular part of a singular distribution (Theorem \ref{residueth}).
\vv

Here is an outline of the paper.  In Section \ref{secBC}, we recall the three types of cohomologies, including Bott-Chern and Aeppli, by considering the complex (\ref{mixed}). We then recall, in Section \ref{relBC},  the theory of 
\v Cech cohomology of a complex of sheaves of differential forms and  we discuss its relative version. These are
applied to the three cases and we have the relative Bott-Chern, the relative Aeppli cohomologies and one more. As mentioned above, we  give a long
exact sequence  relating these three cohomologies  (Theorem \ref{exactmixed}). We also dicuss the relation with the Dolbeault case (Theorem \ref{threlD}). In order to state local and global duality morphisms, we discuss, in Section 4, the cup product in this cohomology theory, in particular the one between  Bott-Chern and Aeppli. Then we discuss the integration theory mainly on Aeppli cohomology.
 In Section \ref{secchBC},
we define the characteristic classes of a \h\ \vb\ in \v{C}ech-Bott-Chern cohomology. The vanishing theorem we need is given in Section \ref{ABvanish} (Corollary \ref{vanherm}).  It arises from an action of a non-singular distribution on a \vb\ and, as an
example, we present the action on the normal bundle of an invariant sub\mfd\ of a distribution, the so-called  Camacho-Sad action.
 In Section \ref{secloc}, we discuss localization theory of characteristic classes of \vb s with actions of singular distributions and the associated Hermitian residues (Theorem~\ref{residueth}). Finally we give an example in Section \ref{secex}.
 It concerns with a Hopf surface with a fibration of elliptic curves on the projective line. 
We show that the  Bott-Chern  class of the pull-back of the hyperplane bundle is localized at 
one of the fibers.
\vv

\noindent
{\em Acknowledgment}\,:
The first named author is  grateful to Hokkaido University for hospitality. 
The both authors would like to thank the referees for precious comments, in particular on Bott-Chern forms, which improved the presentation
of the paper greatly.

\lsection{Bott-Chern cohomology and its companions}\label{secBC}

We list \cite{ADT}, \cite{AT} and \cite{Sch} as references for this section.

Let $M$ denote a complex manifold of dimension $n$. Also let $\cE^{r}$ and $\cE^{p,q}$ denote the sheaves of 
$C^{\infty}$ differential $r$-forms and $(p,q)$-forms on $M$.
For a sheaf $\cS$ and an open set $U$ in $M$, we denote by $\cS(U)$  the set of sections on $U$.

We have the decomposition
\[
\cE^{r}=\bigoplus_{p+q=r}\cE^{p,q}
\]
and accordingly the exterior derivative $d:\cE^{r}\ra \cE^{r+1}$ has the
decomposition $d=\de+\dbar$\,:
\[
\de:\cE^{p,q}\lra \cE^{p+1,q},\qquad \dbar:\cE^{p,q}\lra \cE^{p,q+1}.
\]
Recall that the $r$-th de~Rham cohomology $H^r_d(M)$ of $M$ is the $r$-th cohomology of the complex 
$(\cE^{\bullet}(M),d)$ and the Dolbeault cohomology $H^{p,q}_{\bp}(M)$ of type $(p,q)$ of $M$ is the $q$-th cohomology of the
complex $(\cE^{p,\bullet}(M),\bp)$.

We set 
\[
\cE^{(p,q)+1}=\cE^{p+1,q}\oplus \cE^{p,q+1}
\]
and consider the complex 
\begin{equationth}\label{mixed}
\cdots\overset{d}\lra \cE^{(p-2,q-2)+1}\overset{\bp+\de}\lra \cE^{p-1,q-1}\overset\dbarde\lra \cE^{p,q}\overset{d}\lra \cE^{(p,q)+1}\overset{\bp+\de}\lra\cE^{p+1,q+1}\overset\dbarde\lra\cdots.
\end{equationth}

\paragraph{Bott-Chern cohomology\,:} The {\em Bott-Chern cohomology} $H^{p,q}_{\rm BC}(M)$ of type $(p,q)$ of
$M$ is the cohomology of the  complex
\[
\cE^{p-1,q-1}(M)\overset\dbarde\lra \cE^{p,q}(M)\overset{d}\lra \cE^{(p,q)+1}(M).
\]

We have  canonical morphisms

\begin{equationth}\label{nathomos}
H^{p,q}_{\rm BC}(M)\lra H^{p,q}_{\dbar}(M),\qquad \bigoplus_{p+q=r}H^{p,q}_{\rm BC}(M)\lra H^{r}_d(M).
\end{equationth}
If $M$ is compact K\"ahler, for instance, the above morphisms are \iso s.

\paragraph{Aeppli cohomology\,:}
The {\em Aeppli cohomology} $H^{p,q}_{\rm A}(M)$ of type $(p,q)$ of
$M$ is the cohomology of the complex
\[
\cE^{(p-1,q-1)+1}(M)\overset{\bp+\de}\lra \cE^{p,q}(M)\overset{\dbarde}\lra \cE^{p+1,q+1}(M).
\]
In particular, $H^{n,n}_{\rm A}(M)=H^{2n}_{d}(M)$.
We have  canonical morphisms

\begin{equationth}\label{nathomosa}
H^{p,q}_{\dbar}(M)\lra H^{p,q}_{\rm A}(M),\qquad H^{r}_d(M)\lra  \bigoplus_{p+q=r}H^{p,q}_{\rm A}(M).
\end{equationth}

Note  that the differential $\de:\cE^{p,q}(M)\ra \cE^{p+1,q}(M)$ induces a morphism
\begin{equationth}\label{ad}
\de:H^{p,q}_{\rm A}(M)\lra H^{p+1,q}_{\bp}(M).
 \end{equationth}
 
 \paragraph{The third cohomology\,:} We have one more cohomology, i.e., the cohomology of the complex
 \[
 \cE^{p,q}(M)\overset{d}\lra \cE^{(p,q)+1}(M)\overset{\bp+\de}\lra \cE^{p+1,q+1}(M),
 \]
which will be  denoted by $H^{(p,q)+1}(M)$.

\lsection{Relative Bott-Chern cohomology}\label{relBC}

\subsection{Relative cohomology of a complex of sheaves}\label{subCC}

A general theory of relative cohomology for complexes of sheaves is developed in \cite{Su11}. Here we recall
some of the relevant materials.

Suppose we have a complex $(\cK^{\bullet},d_{\cK})$ of sheaves 
of differential forms 
on $M$. In fact what we have in mind here are the  de~Rham and Dolbeault complexes and the one as in (\ref{mixed}).
We denote by $H^{r}_{d_{\cK}}(M)$ the $r$-th cohomology of $(\cK^{\bullet}(M),d_{\cK})$.

\paragraph{\v{C}ech cohomology of a complex\,:}
Let $\U=\{U_0,U_1\}$ be an open covering of $M$ and set
$U_{01}=U_0\cap U_1$.  We set
\[
\cK^{r}(\U)=\cK^{r}(U_0)\oplus \cK^{r}(U_1)\oplus \cK^{r-1}(U_{01}).
\]
Thus an element $\s$ in $\cK^{r}(\U)$ is given by a
triple $\s=(\s_0, \s_1, \s_{01})$ with $\s_i\in \cK^{r}(U_i)$, $i=0,1$, and $\s_{01}\in \cK^{r-1}(U_{01})$. 
We define the differential
\[
D_{\cK}=D_{\cK}^{r}:\cK^{r}(\U)\lra \cK^{r+1}(\U)\quad\text{by}\ \ D_{\cK}\s =(d_{\cK}\s_0, d_{\cK}\s_1, \s_{1}-\s_{0}-d_{\cK}\s_{01}).
\]
Then we see that $D_{\cK}\circ D_{\cK}=0$.

\begin{definition} The $r$-th  cohomology $H^{r}_{D_{\cK}}(\U)$ of $\cK^{\bullet}$ on $\U$ is the $r$-th cohomology of the complex
$(\cK^{\bullet}(\U),D_{\cK})$\,:
\[
H^{r}_{D_{\cK}}(\U)=\op{Ker}D^{r}_{\cK}/\op{Im}D^{r-1}_{\cK}.
\]
\end{definition}

If we set $Z^{r}_{d_{\cK}}(\U)=\op{Ker}D^{r}_{\cK}$ and $B^{r}_{d_{\cK}}(\U)=\op{Im}D^{r-1}_{\cK}$, by definition
\begin{equationth}\label{CKcocycob}
\begin{aligned}
Z^{r}_{d_{\cK}}(\U)&=\{\,\s\mid d_{\cK}\s_0=d_{\cK}\s_1=0,
\  \ \s_1- \s_0-d_{\cK}\s_{01}=0\,  \},\\
B^{r}_{d_{\cK}}(\U)&=\{\,\s\mid\exists
\tau,  \
\s_0=d_{\cK}\tau_{0},\ 
\s_1=d_{\cK}\tau_{1}, \  \ \s_{01}=\tau_1- \tau_0-d_{\cK}\tau_{01}\,\},
\end{aligned}
\end{equationth}
where $\s=(\s_0,\s_1,\s_{01})
\in \cK^{r}(\U)$ and $\tau=(\tau_0,\tau_1,\tau_{01})
\in \cK^{r-1}(\U)$.
We may somewhat simplify the expression of $B^{r}_{d_{\cK}}(\U)$, i.e., if we set
\[
B=\{\,\s\in \cK^{r}(\U)\mid\exists
(\tau_{0},\tau_{1}),  \
\s_0=d_{\cK}\tau_{0},\ 
\s_1=d_{\cK}\tau_{1}, \  \ \s_{01}=\tau_1- \tau_0\,\},
\]
where $(\tau_{0},\tau_{1})\in \cK^{r-1}(U_{0})\oplus\cK^{r-1}(U_{1})$, we have\,:

\begin{lemma}\label{lemred} $B^{r}_{d_{\cK}}(\U)=B$.
\end{lemma}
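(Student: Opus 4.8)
The plan is to prove the two inclusions of $B^{r}_{d_{\cK}}(\U)=B$ separately. The inclusion $B\subseteq B^{r}_{d_{\cK}}(\U)$ is immediate: if $\s\in B$ with witnessing pair $(\tau_0,\tau_1)$, set $\tau=(\tau_0,\tau_1,0)\in\cK^{r-1}(\U)$; then $D_{\cK}\tau=(d_{\cK}\tau_0,d_{\cK}\tau_1,\tau_1-\tau_0)=\s$, so $\s\in B^{r}_{d_{\cK}}(\U)$. The content is the reverse inclusion, where one must get rid of the term $d_{\cK}\tau_{01}$ that lives only on the overlap $U_{01}$.

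For $B^{r}_{d_{\cK}}(\U)\subseteq B$, suppose $\s=D_{\cK}\tau$ with $\tau=(\tau_0,\tau_1,\tau_{01})\in\cK^{r-1}(\U)$, so $\s_0=d_{\cK}\tau_0$, $\s_1=d_{\cK}\tau_1$ and $\s_{01}=\tau_1-\tau_0-d_{\cK}\tau_{01}$. The idea is to split $\tau_{01}$ into a piece that extends to $U_0$ and a piece that extends to $U_1$, and to absorb each into $\tau_0$ and $\tau_1$ respectively. Choose a smooth partition of unity $\{\rho_0,\rho_1\}$ subordinate to $\U$, with $\op{supp}\rho_i$ closed in $M$ and contained in $U_i$. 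Since the sheaves $\cK^{\bullet}$ are modules over $\cE^{0}$, the product $\rho_1\tau_{01}$ is a well-defined section of $\cK^{r-1}$ over $U_{01}$; because $\op{supp}\rho_1\cap U_0\subseteq U_{01}$, it extends by zero to $\lambda_0\in\cK^{r-1}(U_0)$, and symmetrically $\rho_0\tau_{01}$ extends by zero to $\lambda_1\in\cK^{r-1}(U_1)$, with $\lambda_0+\lambda_1=\tau_{01}$ on $U_{01}$. Now put $\tau_0'=\tau_0+d_{\cK}\lambda_0$ and $\tau_1'=\tau_1-d_{\cK}\lambda_1$. Using $d_{\cK}\circ d_{\cK}=0$ one gets $d_{\cK}\tau_0'=d_{\cK}\tau_0=\s_0$ and $d_{\cK}\tau_1'=d_{\cK}\tau_1=\s_1$, while on $U_{01}$
\[
\tau_1'-\tau_0'=(\tau_1-\tau_0)-d_{\cK}(\lambda_0+\lambda_1)=(\tau_1-\tau_0)-d_{\cK}\tau_{01}=\s_{01}.
\]
Hence $(\tau_0',\tau_1')$ witnesses $\s\in B$, and the two inclusions give $B^{r}_{d_{\cK}}(\U)=B$.

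The only step needing genuine care is the extension-by-zero of $\rho_i\tau_{01}$, i.e.\ the availability of partitions of unity for the complex $\cK^{\bullet}$; this is harmless here since in all cases we use (the complex (\ref{mixed}) and the de~Rham and Dolbeault complexes) the sheaves consist of $C^{\infty}$ forms and are therefore fine. I expect this to be the sole subtlety; everything else is formal manipulation with $d_{\cK}\circ d_{\cK}=0$. It is worth noting that the same argument shows the analogous reduction works for any such complex of fine sheaves, which is why it is convenient to record it at this level of generality before specializing to the three cohomologies of Section~\ref{secBC}.
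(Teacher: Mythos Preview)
Your proof is correct and follows essentially the same approach as the paper: both use a partition of unity $\{\rho_0,\rho_1\}$ to define $\tau_0'=\tau_0+d_{\cK}(\rho_1\tau_{01})$ and $\tau_1'=\tau_1-d_{\cK}(\rho_0\tau_{01})$, absorbing the $d_{\cK}\tau_{01}$ term. Your additional remark on fineness and the extension-by-zero step makes explicit what the paper leaves implicit, but the argument is otherwise identical.
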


\begin{proof} Clearly we have $B\subset B^{r}_{d_{\cK}}(\U)$. Suppose $\s\in B^{r}_{d_{\cK}}(\U)$. Then
$\s_0=d_{\cK}\tau_{0},\ 
\s_1=d_{\cK}\tau_{1}, \  \ \s_{01}=\tau_1- \tau_0-d_{\cK}\tau_{01}$. Letting $\{\rho_{0},\rho_{1}\}$ be a partition of
unity subordinate to $\U$, we set
\[
\tau_{0}'=\tau_{0}+d_{\cK}(\rho_{1}\tau_{01}),\qquad \tau_{1}'=\tau_{1}-d_{\cK}(\rho_{0}\tau_{01}).
\]
Then $\s_0=d_{\cK}\tau_{0}',\ 
\s_1=d_{\cK}\tau_{1}', \  \ \s_{01}=\tau_1'- \tau_0'$.
\end{proof}

\begin{theorem}\label{isogen}
The map $\cK^{r}(M) \lra \cK^{r}(\U)$ given by
$\o \mapsto  \tilde\o=(\o, \o, 0)$ is compatible with the differentials and induces an
isomorphism
\[
\a:H^{r}_{d_{\cK}}(M) \overset{\sim}\lra H^{r}_{D_{\cK}}(\U).
\]
\end{theorem}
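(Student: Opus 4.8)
The plan is to run the classical partition-of-unity (Mayer–Vietoris) argument for the two–set covering $\U$, in the same spirit as the proof of Lemma \ref{lemred}. First I would check compatibility with the differentials, which is immediate: $D_\cK\tilde\o = (d_\cK\o, d_\cK\o, \o-\o-d_\cK 0) = \widetilde{d_\cK\o}$, so $\o\mapsto\tilde\o$ is a morphism of complexes and induces a map $\a$ on cohomology. Then I fix a partition of unity $\{\rho_0,\rho_1\}$ subordinate to $\U$, so that $\op{Supp}\rho_i\subset U_i$ and, for a form $\xi$ on $U_{01}$, the product $\rho_{1-i}\,\xi$ extends by zero to a $C^\infty$ form on $U_i$. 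The only structural facts used are that $(\cK^\bullet,d_\cK)$ is a complex (hence $d_\cK\circ d_\cK=0$) and that each $\cK^r$ is a sheaf of $C^\infty$ forms, so a module over $\cE^0$ stable under multiplication by the $\rho_i$; no Leibniz rule enters, which is why the statement applies verbatim to the de~Rham, Dolbeault and mixed complexes (\ref{mixed}) alike.

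For surjectivity of $\a$, I would take a $D_\cK$–cocycle $\s=(\s_0,\s_1,\s_{01})\in\cK^r(\U)$, so by (\ref{CKcocycob}) $d_\cK\s_0=d_\cK\s_1=0$ and $\s_1-\s_0-d_\cK\s_{01}=0$ on $U_{01}$, and set
\[
\o|_{U_0}=\s_0+d_\cK(\rho_1\s_{01}),\qquad \o|_{U_1}=\s_1-d_\cK(\rho_0\s_{01}).
\]
On $U_{01}$ the difference of the two expressions is $\s_0-\s_1+d_\cK\big((\rho_0+\rho_1)\s_{01}\big)=\s_0-\s_1+d_\cK\s_{01}=0$ by the cocycle relation, so $\o\in\cK^r(M)$ is well defined; moreover $d_\cK\o=d_\cK\s_0+d_\cK d_\cK(\rho_1\s_{01})=0$ on $U_0$, so $\o$ is $d_\cK$–closed. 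Finally, with $\tau=(\rho_1\s_{01},-\rho_0\s_{01},0)\in\cK^{r-1}(\U)$, a direct computation from the definition of $D_\cK$ gives $D_\cK\tau=(\o-\s_0,\o-\s_1,-\s_{01})=\tilde\o-\s$, whence $[\tilde\o]=[\s]$ in $H^r_{D_\cK}(\U)$ and $\a$ is onto.

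For injectivity, I would suppose $\o\in\cK^r(M)$ is $d_\cK$–closed with $\tilde\o=D_\cK\tau$ for some $\tau=(\tau_0,\tau_1,\tau_{01})\in\cK^{r-1}(\U)$, so $\o=d_\cK\tau_0=d_\cK\tau_1$ and $\tau_1-\tau_0-d_\cK\tau_{01}=0$ on $U_{01}$. Setting $\eta|_{U_0}=\tau_0+d_\cK(\rho_1\tau_{01})$ and $\eta|_{U_1}=\tau_1-d_\cK(\rho_0\tau_{01})$, the same cocycle relation forces the two expressions to agree on $U_{01}$, so $\eta\in\cK^{r-1}(M)$; and $d_\cK\eta=d_\cK\tau_0+d_\cK d_\cK(\rho_1\tau_{01})=\o$ on $U_0$. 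Hence $[\o]=0$ in $H^r_{d_\cK}(M)$ and $\a$ is injective.

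I do not expect a genuine obstacle: the argument is the standard one for a two–element cover, so no spectral sequence is needed. The one point requiring care is the patching — verifying that $\rho_{1-i}\s_{01}$ (resp.\ $\rho_{1-i}\tau_{01}$), extended by zero, is really $C^\infty$ on $U_i$, which is exactly what \emph{subordinate to $\U$} ensures — together with keeping $d_\cK\circ d_\cK=0$ as the sole structural input, so that the de~Rham, Dolbeault and mixed cases are all covered at once.
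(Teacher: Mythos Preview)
Your proof is correct and follows essentially the same partition-of-unity argument as the paper: the surjectivity step is identical, and for injectivity you carry out directly the patching that the paper achieves by first invoking Lemma~\ref{lemred} to reduce to $\tau_{01}=0$ and then observing $\tau_0=\tau_1$ is global. The two presentations are minor rearrangements of the same idea.
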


\begin{proof} The first part follows from the definitions so that $\a$ is  well-defined.
We show that $\a$ is surjective and
injective. To see the surjectivity, given $\s=(\s_0,\s_1,\s_{01})\in
Z^{r}_{d_{\cK}}(\U)$. We take  a partition of
unity $\{\rho_0,\rho_1\}$ subordinated to  $\U$  and
set
\[
\o=\begin{cases}\s_{0}+d_{\cK} (\rho_{1}\s_{01})\qquad\text{in}\ \ U_{0},\\
\s_{1}-d_{\cK} (\rho_{0}\s_{01})\qquad\text{in}\ \ U_{1}.
\end{cases}
\]
Then we see that it is a cocycle in $\cK^{r}(M)$. 
We claim that $\tilde\o- \s=(\o- \s_0,\o-
\s_1, -\s_{01} )$ is in $B^{r}_{d_{\cK}}(\U).$ Indeed,
setting $\tau_0=\rho_1\s_{01}$,  $\tau_1=-\rho_0\s_{01}$,
we  see that 
\[
\tilde\o- \s=D_{\cK}(\tau_{0},\tau_{1},0),
\]
which shows that $\widetilde{\o}- \s \in
B^{r}_{d_{\cK}}(\U)$ and thus  $\a$ is surjective.

To see the injectivity of $\a$, given
$\o\in\cK^{r}(M)$ with $d_{\cK}\o=0$ such that $[\widetilde{\o}]=0$, we claim
that there exist $\t\in \cK^{r-1}(M)$ such that $\o=d_{\cK}\t$. 
The condition $[\widetilde{\o}]=0$ means that there exists
$(\tau_0,\tau_1,0)$ such that (cf. Lemma \ref{lemred})
\[
(\o,\o,0)=D_{\cK}(\tau_0,\tau_1,0)=(d_{\cK}\tau_{0},d_{\cK}\tau_{1},\tau_{1}-\tau_{0}).
\]
Thus $\t=\tau_{0}=\tau_{1}$ is a global form with
$\o=d_{\cK} \t$.
\end{proof}

\begin{remark} The cohomology $H^{r}_{D_{\cK}}(\U)$ as defined above is the hypercohomology of the complex
$\cK^{\bullet}$ on $\U$. It can be defined for an arbitrary covering of $M$ and Theorem \ref{isogen} may be
proved in the general case, for instance by a spectral sequence argument
(cf. \cite{GH}, \cite{Sw1}, \cite{Sw2},\cite{Su11}).
\end{remark}

\paragraph{Relative cohomology of a complex\,:}

Let $S$ be a closed set in $M$. Setting $U_0=M\moins S$ and $U_1$ a \nbd\ of $S$, we consider the covering
$\U=\{U_0,U_1\}$ of $M$.
We set 
\[
\cK^{r}(\U,U_{0})=\{\,\s\in \cK^{r}(\U)\mid \s_{0}=0\,\}=\cK^{r}(U_1)\oplus \cK^{r-1}(U_{01}).
\]
Thus an element $\s$ in $\cK^{r}(\U,U_{0})$ is given by a
pair $\s=(\s_1, \s_{01})$ with $\s_1\in \cK^{r}(U_1)$ and $\s_{01}\in \cK^{r-1}(U_{01})$. 
Clearly $(\cK^{\bullet}(\U,U_{0}),D_{\cK})$ is a subcomplex of $(\cK^{\bullet}(\U),D_{\cK})$ with
$D_{\cK}:\cK^{r}(\U,U_{0})\ra \cK^{r+1}(\U,U_{0})$ given by $D_{\cK}\s=(d_{\cK}\s_{1},\s_{1}-d_{\cK}\s_{01})$.
Sometimes $(\s_1, \s_{01})$ is denoted by $(0,\s_1, \s_{01})$ to remember that it is an element in $\cK^{r}(\U)$.

\begin{definition} The $r$-th relative cohomology $H^{r}_{D_{K}}(\U,U_{0})$ of $\cK$ is the $r$-th cohomology of the complex
$(\cK^{\bullet}(\U,U_{0}),D_{\cK})$.
\end{definition}

Note that, in the relative case, we may not simplify the expression of the coboundaries as in Lemma \ref{lemred}.

From the  exact sequence of complexes
\[
0\lra \cK^{\bullet}(\U,U_{0})\lra \cK^{\bullet}(\U)\lra \cK^{\bullet}(U_{0})\lra 0,
\]
we have\,:

\begin{theorem}\label{thlongex}There is an exact sequence
\[
\cdots\lra H^{r-1}_{d_{\cK}}(U_{0})\overset{\delta^{*}}\lra H^{r}_{D_{\cK}}(\U,U_{0})\overset{j^{*}}\lra H^{r}_{D_{\cK}}(\U)\overset{i^{*}}\lra H^{r}_{d_{\cK}}(U_{0})\lra\cdots,
\]
where $\delta^{*}[\t]=[(0,-\t)]$, $j^{*}[(\s_{1},\s_{01})]=[(0,\s_{1},\s_{01})]$ and 
$i^{*}[(\s_{0},\s_{1},\s_{01})]=[\s_{0}]$.
\end{theorem}

By Theorem \ref{isogen} and the five lemma we have\,:

\begin{corollary}\label{corind} The cohomology $H^{r}_{D_{\cK}}(\U,U_{0})$ is determined uniquely up to canonical \iso s,
independently of the choice of $U_{1}$.
\end{corollary}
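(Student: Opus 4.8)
The plan is to follow the route indicated just before the statement: construct a morphism between the long exact sequences of Theorem \ref{thlongex} attached to two choices of \nbd\ of $S$ and conclude by the five lemma. Fix $U_0=M\moins S$ and suppose $U_1'$ is a second \nbd\ of $S$ with $U_1'\subset U_1$; write $\U'=\{U_0,U_1'\}$ and $U_{01}'=U_0\cap U_1'$. Restriction of differential forms gives a map $\cK^{r}(\U,U_0)=\cK^{r}(U_1)\oplus\cK^{r-1}(U_{01})\ra \cK^{r}(U_1')\oplus\cK^{r-1}(U_{01}')=\cK^{r}(\U',U_0)$, and likewise a map $\cK^{\bullet}(\U)\ra\cK^{\bullet}(\U')$, while on $\cK^{\bullet}(U_0)$ we take the identity. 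Since $d_{\cK}$ commutes with restriction, the formula $D_{\cK}(\s_1,\s_{01})=(d_{\cK}\s_1,\s_1-d_{\cK}\s_{01})$ shows these are chain maps, and they are compatible with the short exact sequences $0\ra\cK^{\bullet}(\U,U_0)\ra\cK^{\bullet}(\U)\ra\cK^{\bullet}(U_0)\ra 0$ from which Theorem \ref{thlongex} is obtained. Thus one gets a commutative ladder between the two long exact sequences, equal to the identity on every $H^{*}_{d_{\cK}}(U_0)$-term.

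The one point deserving care is that the vertical map $H^{r}_{D_{\cK}}(\U)\ra H^{r}_{D_{\cK}}(\U')$ is an \iso. This follows from Theorem \ref{isogen}: the restriction chain map $\cK^{\bullet}(\U)\ra\cK^{\bullet}(\U')$ sits in a commutative triangle with the two maps $\o\mapsto(\o,\o,0)$ of Theorem \ref{isogen} out of $\cK^{\bullet}(M)$, because $(\o,\o,0)$ restricts to $(\o,\o,0)$ over $\U'$. Hence on cohomology it intertwines the two isomorphisms $\a$ of Theorem \ref{isogen} and is therefore itself an \iso. Applying the five lemma to the ladder along the segment $H^{r-1}_{D_{\cK}}(\U)\to H^{r-1}_{d_{\cK}}(U_0)\to H^{r}_{D_{\cK}}(\U,U_0)\to H^{r}_{D_{\cK}}(\U)\to H^{r}_{d_{\cK}}(U_0)$ then shows that the restriction-induced map $H^{r}_{D_{\cK}}(\U,U_0)\ra H^{r}_{D_{\cK}}(\U',U_0)$ is an \iso.

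For two arbitrary \nbd s $U_1$ and $U_1'$ of $S$ I would pass through the common refinement $U_1''=U_1\cap U_1'$, still a \nbd\ of $S$, and compose the two restriction-induced isomorphisms $H^{r}_{D_{\cK}}(\U,U_0)\cong H^{r}_{D_{\cK}}(\U'',U_0)\cong H^{r}_{D_{\cK}}(\U',U_0)$ from the previous step. Transitivity of restriction (the restriction from $U_1$ to $V$ factors through any intermediate $V\subset U_1''\subset U_1$) shows that the resulting \iso\ $H^{r}_{D_{\cK}}(\U,U_0)\ra H^{r}_{D_{\cK}}(\U',U_0)$ is independent of the chosen common refinement and that, for triples of \nbd s, these isomorphisms compose consistently; this is precisely the asserted uniqueness up to canonical \iso.

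I do not anticipate a genuine obstacle. The only mildly delicate step is the verification that $H^{r}_{D_{\cK}}(\U)\ra H^{r}_{D_{\cK}}(\U')$ is an isomorphism, which is settled by the compatibility with $\a$ noted above; everything else is naturality of restriction together with the five lemma and a standard cofinality argument for the common refinement.
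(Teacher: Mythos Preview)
Your argument is correct and follows exactly the route the paper indicates: it invokes Theorem \ref{isogen} to identify the absolute terms and then applies the five lemma to the ladder of long exact sequences from Theorem \ref{thlongex}. The paper states only ``By Theorem \ref{isogen} and the five lemma'', so you have simply filled in the details (including the common-refinement step for non-nested $U_1$, $U_1'$) that the paper leaves implicit.
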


In view of the above, we may denote $H^{r}_{D_{\cK}}(\U,U_{0})$ by $H^{r}_{d_{\cK}}(M,M\ssm S)$. Then we have\,:
\begin{corollary}[Excision] For any open set $U$ containing $S$, there is a canonical \iso
\[
H^{r}_{d_{\cK}}(M,M\ssm S)\simeq H^{r}_{d_{\cK}}(U,U\ssm S).
\]
\end{corollary}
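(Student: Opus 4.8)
The plan is to exploit the freedom, guaranteed by Corollary \ref{corind}, in the choice of the \nbd\ of $S$ entering the definition of the relative cohomology, and to take this \nbd\ to be $U$ itself in \emph{both} constructions. This is legitimate: since $S$ is closed in $M$ and $U$ is open with $S\subset U$, the set $U$ is an open \nbd\ of $S$ in $M$ and, trivially, an open \nbd\ of $S$ in $U$.

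First I would write out the two relative cochain complexes for this choice. For $H^{r}_{d_{\cK}}(M,M\ssm S)$ one uses the covering $\{M\ssm S,\,U\}$ of $M$; since $(M\ssm S)\cap U=U\ssm S$, the relative cochain group in degree $r$ is
\[
\cK^{r}(\{M\ssm S,U\},\,M\ssm S)=\cK^{r}(U)\oplus\cK^{r-1}(U\ssm S).
\]
For $H^{r}_{d_{\cK}}(U,U\ssm S)$ one uses the covering $\{U\ssm S,\,U\}$ of $U$; since $(U\ssm S)\cap U=U\ssm S$, the relative cochain group in degree $r$ is
\[
\cK^{r}(\{U\ssm S,U\},\,U\ssm S)=\cK^{r}(U)\oplus\cK^{r-1}(U\ssm S).
\]
These are literally the same group in each degree.

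Next I would verify that the differentials coincide. On a relative cochain $\s=(\s_{1},\s_{01})$ the differential is $D_{\cK}\s=(d_{\cK}\s_{1},\,\s_{1}-d_{\cK}\s_{01})$, and this expression involves only $\s_{1}\in\cK^{\bullet}(U)$, $\s_{01}\in\cK^{\bullet}(U\ssm S)$ and the intrinsic differential $d_{\cK}$ of the complex $\cK^{\bullet}$ --- none of which is affected by whether $U$ is regarded as a subset of $M$ or as an ambient space in its own right. Hence the two relative complexes agree term by term and differential by differential, and therefore their cohomologies agree in every degree $r$.

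Finally, to promote this equality of complexes to a \emph{canonical} isomorphism --- one not tied to the particular choice of $U$ as auxiliary \nbd\ --- I would invoke Corollary \ref{corind} on each side: it says precisely that $H^{r}_{d_{\cK}}(M,M\ssm S)$ and $H^{r}_{d_{\cK}}(U,U\ssm S)$ are each well defined up to canonical isomorphism, independently of the chosen \nbd\ of $S$. Composing the canonical identifications furnished by Corollary \ref{corind} with the identity of complexes established above yields the asserted canonical isomorphism $H^{r}_{d_{\cK}}(M,M\ssm S)\simeq H^{r}_{d_{\cK}}(U,U\ssm S)$. There is no genuine obstacle here; the only point needing a moment's attention --- and the one place where openness of $U$ in $M$ is used --- is the bookkeeping observation that a single \nbd\ of $S$, namely $U$, may serve simultaneously for the pair $(M,M\ssm S)$ and the pair $(U,U\ssm S)$.
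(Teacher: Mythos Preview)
Your proof is correct and is essentially the argument the paper has in mind: the paper states the excision as an immediate corollary of Corollary~\ref{corind} without writing out a proof, and your choice of $U$ itself as the auxiliary neighborhood on both sides makes explicit precisely why the two relative complexes coincide.
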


\begin{example}\label{exdRD} {\bf 1.} In the case $\cK^{\bullet}=\cE^{\bullet}$ and  $d_{\cK}=d$, we write  $D_{\cK}=D$. The cohomologies defined as above are the \v{C}ech-de~Rham cohomology $H^{r}_{D}(\U)$ and the relative de~Rham cohomology $H^{r}_{D}(\U,U_{0})$ (cf. \cite{BT}, \cite{Sw1}). Thus $H^{r}_{D}(\U)$ is the cohomology
of the complex $(\cE^{\bullet}(\U), D)$, where
\[
\cE^{r}(\U)=\cE^{r}(U_{0})\oplus \cE^{r}(U_{1})\oplus \cE^{r-1}(U_{01})
\]
and $D:\cE^{r}(\U)\ra \cE^{r+1}(\U)$ is given by
$D(\z_{0},\z_{1},\z_{01})=(d\z_{0},d\z_{1},\z_{1}-\z_{0}-d\z_{01})$.

By Theorem~\ref{isogen}, there is a canonical \iso
\begin{equationth}\label{isodr}
H^{r}_{d}(M)\simeq H^{r}_{D}(\U).
\end{equationth}

The relative de~Rham cohomology
$H^{r}_{D}(\U,U_{0})$ is the cohomology of the subcomplex $(\cE^{\bullet}(\U,U_{0}),D)$ of 
$(\cE^{\bullet}(\U),D)$, where
\[
\cE^{r}(\U,U_{0})=\cE^{r}(U_{1})\oplus\cE^{r-1}(U_{01}).
\]
\vv

\noindent
{\bf 2.}
 In the case $\cK^{\bullet}=\cE^{p,\bullet}$ and  $d_{\cK}=\bp$, we write  $D_{\cK}=\bar\vt$. The cohomologies as defined above are  the \v{C}ech-Dolbeault  cohomology $H^{p,q}_{\bar\vt}(\U)$ and the relative Dolbeault  cohomology $H^{p,q}_{\bar\vt}(\U,U_{0})$ (cf. \cite{Sw2}, where the operator $\bar\vt$ is denoted by $\bar D$, and \cite{Sw4}). 
 Thus $H^{p,q}_{\bar\vt}(\U)$ is the cohomology
of the complex $(\cE^{p,\bullet}(\U), \bar\vt)$, where
\[
\cE^{p,q}(\U)=\cE^{p,q}(U_{0})\oplus \cE^{p,q}(U_{1})\oplus \cE^{p,q-1}(U_{01})
\]
and $\bar\vt:\cE^{p,q}(\U)\ra \cE^{p,q+1}(\U)$ is given by
$\bar\vt(\z_{0},\z_{1},\z_{01})=(\bp\z_{0},\bp\z_{1},\z_{1}-\z_{0}-\bp\z_{01})$.

By Theorem \ref{isogen}, there is a canonical \iso
\begin{equationth}\label{isod}
H^{p,q}_{\bp}(M)\simeq H^{p,q}_{\bar\vt}(\U).
\end{equationth}

The relative Dolbeault cohomology
$H^{p,q}_{D}(\U,U_{0})$ is the cohomology of the subcomplex $(\cE^{p,\bullet}(\U,U_{0}),\bar\vt)$ of 
$(\cE^{p,\bullet}(\U),\bar\vt)$, where
\[
\cE^{p,q}(\U,U_{0})=\cE^{p,q}(U_{1})\oplus\cE^{p,q-1}(U_{01}).
\]
\end{example}
 
 \
 
In the sequel we examine each cohomology arising from (\ref{mixed}). For this we set
\begin{equationth}\label{cochaingen}
\begin{aligned}
\cE_{\rm BC}^{p,q}(\U)&=\cE^{p,q}(U_0)\oplus \cE^{p,q}(U_1)\oplus \cE^{p-1,q-1}(U_{01}),\\
\cE_{\rm A}^{p,q}(\U)&=\cE^{p,q}(U_0)\oplus \cE^{p,q}(U_1)\oplus \cE^{(p-1,q-1)+1}(U_{01}),\\
\cE^{(p,q)+1}(\U)&=\cE^{(p,q)+1}(U_0)\oplus \cE^{(p,q)+1}(U_1)\oplus \cE^{p,q}(U_{01})
\end{aligned}
\end{equationth}
and
\begin{equationth}\label{relcochaingen}
 \begin{aligned}
 \cE_{\rm BC}^{p,q}(\U,U_{0})&=\cE^{p,q}(U_1)\oplus \cE^{p-1,q-1}(U_{01}),\\
 \cE_{\rm A}^{p,q}(\U,U_{0})&=\cE^{p,q}(U_1)\oplus \cE^{(p-1,q-1)+1}(U_{01}),\\
 \cE^{(p,q)+1}(\U,U_{0})&=\cE^{(p,q)+1}(U_1)\oplus \cE^{p,q}(U_{01}).
 \end{aligned}
\end{equationth}

\subsection{Relative Bott-Chern cohomology} The relevant part of the complex is
\[
\cE^{(p-2,q-2)+1}\overset{\bp+\de}\lra \cE^{p-1,q-1}\overset\dbarde\lra \cE^{p,q}\overset{d}\lra \cE^{(p,q)+1}.
\]

\paragraph{\v{C}ech-Bott-Chern cohomology\,:}
The group of {\em \v{C}ech-Bott-Chern cochains}  of type $(p,q)$ is 
$\cE_{\rm BC}^{p,q}(\U)$.
Thus an element $\s$ in $\cE_{\rm BC}^{p,q}(\U)$ is given by a
triple $\s=(\s_0, \s_1, \s_{01})$.
The groups of  cocycles and  coboundaries are given by (cf. (\ref{CKcocycob}))
\begin{equationth}\label{CBCcocycob}
\begin{aligned}
Z^{p,q}_{\rm BC}(\U)&=\{\,\s\mid d\s_0=d\s_1=0,
\  \ \s_1- \s_0-\dbarde\s_{01}=0\,  \},\\
B^{p,q}_{\rm BC}(\U)&=\{\,\s\mid\exists
\tau,  \
\s_0=\dbarde\tau_{0},\ 
\s_1=\dbarde\tau_{1}, \  \ \s_{01}=
\tau_1- \tau_0-\bp\tau_{01}^{(1)}-\de\tau_{01}^{(2)}\,\},
\end{aligned}
\end{equationth}
where $\s\in \cE_{\rm BC}^{p,q}(\U)$ and 
$\tau=(\tau_0,\tau_1,\tau_{01}^{(1)},\tau_{01}^{(2)})\in
\cE^{p-1,q-1}_{\rm A}(\U)$.

\begin{definition} The {\em \v{C}ech-Bott-Chern cohomology }of type $(p,q)$ of $\U$ is defined by
\[
H^{p,q}_{\rm BC}(\U)=Z^{p,q}_{\rm BC}(\U)/B^{p,q}_{\rm BC}(\U).
\]
\end{definition}

By Theorem \ref{isogen}, 
the map $\cE^{p,q}(M) \ra\cE_{\rm BC}^{p,q}(\U)$ given by
$\o \mapsto  (\o, \o, 0)$ induces an
isomorphism
\begin{equationth}\label{isobc}
H^{p,q}_{\rm BC}(M) \overset{\sim}\lra
H^{p,q}_{\rm BC}(\U).
\end{equationth}
The inverse assigns to the class of $\s=(\s_{0},\s_{1},\s_{01})$ the class of the global  form
\[
\o= \rho_0 \s_0+ \rho_1 \s_1+\de \rho_0 \wedge
\dbar \s_{01}- \dbar\rho_0 \wedge
\de \s_{01}-
\dbarde \rho_0 \wedge \s_{01},
\]
where $(\rho_{0},\rho_{1})$ is a  partition of unity subordinate to $\U$.

By Lemma \ref{lemred}, we may simplify the expression of the coboundary group, i.e., we may drop
$\tau_{01}^{(1)}$ and $\tau_{01}^{(2)}$ in (\ref{CBCcocycob}).

\paragraph{Relative Bott-Chern cohomology\,:} The group of relative Bott-Chern cochains is
$\cE_{\rm BC}^{p,q}(\U,U_0)$.
The relative cocycle and coboundary groups are given by 
\[
\begin{aligned}
Z_{\rm BC}^{p,q}(\U,U_0)&=\{\,\s\mid d\s_1=0,\ 
\s_1-\dbarde \s_{01}=0\,\},\\
B_{\rm BC}^{p,q}(\U,U_0)&=\{\,\s\mid\exists\tau, \
\s_1=\dbarde\tau_{1}, \  \ \s_{01}=
\tau_1-\bp\tau_{01}^{(1)}-\de\tau_{01}^{(2)}\,\},
\end{aligned}
\]
where $\s\in \cE_{\rm BC}^{p,q}(\U,U_0)$ and $\tau=(\tau_{1},\tau_{01}^{(1)},\tau_{01}^{(2)})\in 
\cE^{p-1,q-1}_{\rm A}(\U,U_{0})$.

\begin{definition} The {\em relative Bott-Chern cohomology} is defined by
\[
H^{p,q}_{\rm BC}(\U,U_0)=Z_{\rm BC}^{p,q}(\U,U_0)/B_{\rm BC}^{p,q}(\U,U_0).
\]
\end{definition}

\subsection{Relative Aeppli cohomology}

The relevant part of the complex is
\[
\cE^{p-1,q-1}\overset{d}\lra \cE^{(p-1,q-1)+1}\overset{\bp+\de}\lra \cE^{p,q}\overset\dbarde\lra \cE^{p+1,q+1}.
\]

\paragraph{\v{C}ech-Aeppli cohomology\,:} Let $\U=\{U_{0},U_{1}\}$ be an open covering of $M$ as before.
The group of {\em \v{C}ech-Aeppli cochains} of type $(p,q)$ is $\cE_{\rm A}^{p,q}(\U)$. Thus an element $\xi$ in $ \cE_{\rm A}^{p,q}(\U)$ is given by a quadruple 
$(\xi_0,\xi_1,\xi_{01}^{(1)},\xi_{01}^{(2)})$.
The  cocycle and coboundary groups of type
$(p,q)$ are given by
\begin{equationth}\label{Acocycob}
\begin{aligned}
Z^{p,q}_{\rm A}(\U)&=\{\,\xi\mid\dbard\xi_0=
\dbard\xi_1=0, \  \ \xi_1- \xi_0-\bp\xi_{01}^{(1)}-\de\xi_{01}^{(2)}=0\,   \},\\
B^{p,q}_{\rm A}(\U)&=\{\,\xi\mid\exists \eta, \
\xi_i= \bp \eta_i^{(1)}+ \de \eta_i^{(2)}, 
\ \xi_{01}^{(1)}= \eta_{1}^{(1)}- \eta_{0}^{(1)}-\de\eta_{01}, \ \xi_{01}^{(2)}= \eta_{1}^{(2)}- \eta_{0}^{(2)}-\bp\eta_{01}\,\},
\end{aligned}
\end{equationth}
where $\xi\in \cE_{\rm A}^{p,q}(\U)$ and $\eta=(\eta_{0}^{(1)},\eta_{0}^{(2)},\eta_{1}^{(1)},\eta_{1}^{(2)},\eta_{01})\in \cE^{(p-1,q-1)+1}(\U)$, i.e.,
$\eta_{i}^{(1)}\in \cE^{p,q-1}(U_{i})$,
$\eta_{i}^{(2)}\in \cE^{p-1,q}(U_{i})$, $i=0,1$, and $\eta_{01}\in \cE^{p-1,q-1}(U_{01})$. 

\begin{definition} The
{\em \v{C}ech-Aeppli cohomology} of type $(p, q)$
of $\U$ is defined by
\[
H^{p,q}_{\rm A}(\U)=Z^{p,q}_{\rm A}(\U)/B^{p,q}_{\rm A}(\U).
\]
\end{definition}

By Theorem \ref{isogen}, the map $\cE^{p,q}(M) \ra  \cE_{\rm A}^{p,q}(\U)$ given by
$\o \mapsto  (\o, \o, 0,0)$ induces an
isomorphism
\begin{equationth}\label{isoa}
H^{p,q}_{\rm A}(M) \overset{\sim}\lra
H^{p,q}_{\rm A}(\U).
\end{equationth}
The inverse is given by assigning to the class of $\s$ the class of the global form
\[
\o= \rho_0 \xi_0+ \rho_1 \xi_1-\de \rho_0 \wedge
\xi_{01}^{(1)}- \dbar\rho_0 \wedge\xi_{01}^{(2)}.
\]

By Lemma \ref{lemred}, we may simplify the expression of the coboundary group, i.e., we may set $\eta_{01}=0$ in (\ref{Acocycob}).

We may also relax the coboundary condition.
Thus we set
\[
B'=\{\,\xi\mid \xi_i= \bp \eta_i^{(1)}+ \de \eta_i^{(2)}, 
\ \xi_{01}^{(k)}= \eta_{1}^{(k)}- \eta_{0}^{(k)}-\bp\eta_{01}^{(k1)}-\de\eta_{01}^{(k2)},\ i=0,1,\ k=1,2\,\},
\]
where $\xi\in \cE_{\rm A}^{p,q}(\U)$, $\eta_{i}^{(1)}\in \cE^{p,q-1}(U_{i})$,
$\eta_{i}^{(2)}\in \cE^{p-1,q}(U_{i})$, $i=0,1$,  $\eta_{01}^{(11)}\in \cE^{p,q-2}(U_{01})$, $\eta_{01}^{(12)}=\eta_{01}^{(21)}\in \cE^{p-1,q-1}(U_{01})$, $\eta_{01}^{(22)}\in \cE^{p-2,q}(U_{01})$.

\begin{lemma}\label{lemAe} We have $B_{A}^{p,q}(\U)=B'$.
\end{lemma}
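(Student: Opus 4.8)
The plan is to prove the two inclusions $B'\subset B_{A}^{p,q}(\U)$ and $B_{A}^{p,q}(\U)\subset B'$ separately. One inclusion should be immediate from the definitions, while the other will require a partition-of-unity manipulation modeled on the proof of Lemma \ref{lemred}.

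\emph{The easy inclusion.} First I would check $B_{A}^{p,q}(\U)\subset B'$. Recall that after invoking Lemma \ref{lemred} we may take $\eta_{01}=0$ in the description (\ref{Acocycob}) of $B_{A}^{p,q}(\U)$, so an element of $B_{A}^{p,q}(\U)$ has the form $\xi_i=\bp\eta_i^{(1)}+\de\eta_i^{(2)}$, $\xi_{01}^{(1)}=\eta_1^{(1)}-\eta_0^{(1)}$, $\xi_{01}^{(2)}=\eta_1^{(2)}-\eta_0^{(2)}$. This is exactly an element of $B'$ with all the extra terms $\eta_{01}^{(11)}$, $\eta_{01}^{(12)}$, $\eta_{01}^{(22)}$ taken to be zero; hence $B_{A}^{p,q}(\U)\subset B'$ trivially (and even without simplifying, the original $\eta_{01}$ plays the role of $\eta_{01}^{(12)}=\eta_{01}^{(21)}$ with $\eta_{01}^{(11)}=\eta_{01}^{(22)}=0$, so no appeal to Lemma \ref{lemred} is strictly needed here).

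\emph{The hard inclusion.} The substance is $B'\subset B_{A}^{p,q}(\U)$: given $\xi\in B'$ I must absorb the extra terms $\bp\eta_{01}^{(11)}$, $\de\eta_{01}^{(22)}$ (and the $\eta_{01}^{(12)}$ pieces) into modified local potentials $\eta_i^{(1)},\eta_i^{(2)}$. The idea, as in Lemma \ref{lemred}, is to use a partition of unity $\{\rho_0,\rho_1\}$ subordinate to $\U$ to split each $\eta_{01}^{(\cdot\cdot)}$ as $\rho_1\eta_{01}^{(\cdot\cdot)}$ on $U_0$ and $-\rho_0\eta_{01}^{(\cdot\cdot)}$ on $U_1$. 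Concretely, set
\[
\eta_0'^{(1)}=\eta_0^{(1)}+\de(\rho_1\eta_{01}^{(12)})+\bp(\rho_1\eta_{01}^{(11)}),\qquad
\eta_1'^{(1)}=\eta_1^{(1)}-\de(\rho_0\eta_{01}^{(12)})-\bp(\rho_0\eta_{01}^{(11)}),
\]
\[
\eta_0'^{(2)}=\eta_0^{(2)}+\bp(\rho_1\eta_{01}^{(21)})+\de(\rho_1\eta_{01}^{(22)}),\qquad
\eta_1'^{(2)}=\eta_1^{(2)}-\bp(\rho_0\eta_{01}^{(21)})-\de(\rho_0\eta_{01}^{(22)}).
\]
Then one computes, using $\rho_0+\rho_1=1$ and $\bp^2=\de^2=0$, that $\bp\eta_i'^{(1)}+\de\eta_i'^{(2)}=\bp\eta_i^{(1)}+\de\eta_i^{(2)}=\xi_i$ (the cross terms $\bp\de$ and $\de\bp$ cancel because $\eta_{01}^{(12)}=\eta_{01}^{(21)}$), and that $\eta_1'^{(1)}-\eta_0'^{(1)}=\xi_{01}^{(1)}$, $\eta_1'^{(2)}-\eta_0'^{(2)}=\xi_{01}^{(2)}$ — so $\xi$ lands in $B_{A}^{p,q}(\U)$ (in the simplified form with $\eta_{01}=0$). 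The main obstacle, and the only point needing care, is bookkeeping the mixed $\de\bp$ versus $\bp\de$ terms and confirming that the shared term $\eta_{01}^{(12)}=\eta_{01}^{(21)}$ is precisely what makes them cancel in the expression for $\xi_i$; the degree constraints ($\eta_{01}^{(11)}\in\cE^{p,q-2}$, etc.) guarantee all the forms written above live in the correct bidegrees. Combining the two inclusions gives $B_{A}^{p,q}(\U)=B'$.
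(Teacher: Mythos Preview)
Your proof is correct and follows essentially the same approach as the paper's: the easy inclusion $B_{A}^{p,q}(\U)\subset B'$ is immediate, and for the reverse inclusion you use a partition of unity to modify the local potentials exactly as the paper does (your $\eta_i'^{(k)}$ coincide with the paper's $\chi_i^{(k)}$). Your explicit remark that the constraint $\eta_{01}^{(12)}=\eta_{01}^{(21)}$ is precisely what makes the mixed $\bp\de$ and $\de\bp$ terms cancel in $\bp\eta_i'^{(1)}+\de\eta_i'^{(2)}$ is a helpful clarification that the paper leaves to the reader.
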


\begin{proof} Clearly $B_{A}^{p,q}(\U)\subset B'$. Take an element $\xi\in B'$. Denoting by $\{\rho_{0},\rho_{1}\}$ a partition of unity subordinate to $\U$ we set
\[
\chi_{0}^{(k)}=\eta_{0}^{(k)}+\bp(\rho_{1}\eta_{01}^{(k1)})+\de(\rho_{1}\eta_{01}^{(k2)}),\qquad
\chi_{1}^{(k)}=\eta_{1}^{(k)}-\bp(\rho_{0}\eta_{01}^{(k1)})-\de(\rho_{0}\eta_{01}^{(k2)}).
\]
Then $\xi_i= \bp \chi_i^{(1)}+ \de \chi_i^{(2)}, 
\ \xi_{01}^{(k)}= \chi_{1}^{(k)}- \chi_{0}^{(k)},\ i=0,1,\ k=1,2$.
\end{proof}

The following is proved by applying Lemma \ref{lemred} for the \v{C}ech-de~Rham coboundaries or by
applying Lemma \ref{lemAe}\,:

\begin{proposition}\label{propCA} The morphism $\cE^{n,n}_{\rm A}(\U)\ra \cE^{2n}(\U)$ given by 
\[
(\xi_{0},\xi_{1},\xi_{01}^{(1)},\xi_{01}^{(2)})\mapsto (\xi_{0},\xi_{1},\xi_{01}^{(1)}+\xi_{01}^{(2)})
\]
 induces an \iso
\[
H^{n,n}_{\rm A}(\U)\overset\sim\lra H^{2n}_{D}(\U).
\]
\end{proposition}

The above \iso\ is compatible with the identity $H^{n,n}_{A}(M)=H^{2n}_{d}(M)$ via the \iso s
of Theorem \ref{isogen} for the Aeppli and de~Rham cases.

\paragraph{Relative Aeppli cohomology\,:}
Let $S$ be a closed set in $M$. Setting $U_0=M\moins S$ and $U_1$ a \nbd\ of $S$, we consider the covering
$\U=\{U_0,U_1\}$ of $M$. The group of relative Aeppli cochains is  
$\cE_{\rm A}^{p,q}(\U,U_0)$.
The  relative cocycle  and coboundary groups are given by  
\[
\begin{aligned}
Z^{p,q}_{\rm A}(\U,U_{0})&=\{\,\xi\mid
\dbard\xi_1=0, \  \ \xi_1-\dbar\xi_{01}^{(1)}-\de\xi_{01}^{(2)}=0\,   \},\\
B^{p,q}_{\rm A}(\U,U_{0})&=\{\,\xi\mid\exists\eta, \ \xi_1= \dbar \eta_1^{(1)}+ \de \eta_1^{(2)}, 
\ \xi_{01}^{(1)}= \eta_{1}^{(1)}-\de\eta_{01},\ \xi_{01}^{(2)}= \eta_{1}^{(2)}-\dbar\eta_{01}\,\},
\end{aligned}
\]
where $\xi\in \cE_{\rm A}^{p,q}(\U,U_{0})$ and $\eta=(\eta_{1}^{(1)},\eta_{1}^{(2)},\eta_{01})\in \cE^{(p-1,q-1)+1}(\U,U_{0})$.

\begin{definition} The {\em relative Aeppli cohomology} is defined by
\[
H^{p,q}_{\rm A}(\U,U_0)=Z^{p,q}_{\rm A}(\U,U_0)/B^{p,q}_{\rm A}(\U,U_0).
\]
\end{definition}

In the relative case, Lemma \ref{lemred} or \ref{lemAe} does not apply and we do not have an \iso\
as in Proposition \ref{propCA}. However we have\,:

\begin{proposition}\label{proprA} The morphism $\cE^{n,n}_{\rm A}(\U,U_{0})\ra \cE^{2n}(\U,U_{0})$ given by 
\[
(\xi_{1},\xi_{01}^{(1)},\xi_{01}^{(2)})\mapsto (\xi_{1},\xi_{01}^{(1)}+\xi_{01}^{(2)})
\]
 induces an epimorphism
\[
H^{n,n}_{\rm A}(\U,U_{0})\lra H^{2n}_{D}(\U,U_{0}).
\]
\end{proposition}

\subsection{An exact sequence}  

Considering 
\[
\cE^{p-1,q-1}\overset\dbarde\lra \cE^{p,q}\overset{d}\lra \cE^{(p,q)+1}\overset{\bp+\de}\lra\cE^{p+1,q+1},
\]
we may also define $H^{(p,q)+1}(\U)$ and $H^{(p,q)+1}(\U,U_{0})$ using the cochains $\cE^{(p,q)+1}(\U)$ and $\cE^{(p,q)+1}(\U,U_{0})$ (cf. (\ref{cochaingen}) and (\ref{relcochaingen})).

By focusing on the Bott-Chern cohomology in  Theorm \ref{thlongex}, we have\,:

\begin{theorem}\label{exactmixed} There is an exact sequence
\[
\begin{aligned}
\cdots \lra &H^{p-1,q-1}_{\rm A}(\U)\lra H^{p-1,q-1}_{\rm A}(U_{0})\overset{\delta^{*}}\lra H^{p,q}_{\rm BC}(\U,U_{0})\overset{j^{*}}\lra H^{p,q}_{\rm BC}(\U)\overset{i^{*}}\lra H^{p,q}_{\rm BC}(U_{0})\\
&\lra H^{(p,q)+1}(\U,U_{0})\lra H^{(p,q)+1}(\U)\lra H^{(p,q)+1}(U_{0})\lra H^{p+1,q+1}_{\rm A}(\U,U_{0})\lra\cdots.
\end{aligned}
\]
\end{theorem}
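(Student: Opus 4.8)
The plan is to derive this long exact sequence as a special case of the general long exact sequence of Theorem \ref{thlongex}, by taking the complex $\cK^{\bullet}$ to be the mixed complex (\ref{mixed}) and then ``reindexing'' so that the cohomologies appearing are expressed in terms of Bott-Chern, Aeppli, and the third cohomology $H^{(p,q)+1}$. The key observation is that the complex (\ref{mixed}) is $3$-periodic in type: writing it out, a window of four consecutive terms starting at $\cE^{(p-1,q-1)+1}$ reads
\[
\cE^{(p-1,q-1)+1}\overset{\bp+\de}\lra \cE^{p,q}\overset{d}\lra \cE^{(p,q)+1}\overset{\bp+\de}\lra \cE^{p+1,q+1}\overset{d}\lra \cdots,
\]
so the $r$-th cohomology $H^{r}_{d_{\cK}}(M)$ of (\ref{mixed}) equals $H^{p,q}_{\rm BC}(M)$, $H^{(p,q)+1}(M)$, or $H^{p+1,q+1}_{\rm A}(M)$ according to the residue of $r$ modulo the period, and similarly on the \v Cech and relative levels (these identifications are exactly (\ref{CBCcocycob}), (\ref{Acocycob}) and their relative counterparts, together with Lemma \ref{lemred} and Lemma \ref{lemAe} to match the coboundary formulas).

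First I would fix the bookkeeping: declare that the spot occupied by $\cE^{p,q}$ in (\ref{mixed}) is ``degree $0$'' (for this fixed $(p,q)$), so that $\cE^{(p,q)+1}$ sits in degree $1$, $\cE^{p+1,q+1}$ in degree $2$, $\cE^{p+1,q+1}\to\cdots$ continues with $\cE^{(p+1,q+1)+1}$ in degree $3$, etc., and going backwards $\cE^{(p-1,q-1)+1}$ is in degree $-1$, $\cE^{p-1,q-1}$ in degree $-2$. With this convention one checks directly from the cochain definitions (\ref{cochaingen})--(\ref{relcochaingen}) and the cocycle/coboundary descriptions above that
\[
H^{0}_{D_{\cK}}(\U)=H^{p,q}_{\rm BC}(\U),\quad H^{1}_{D_{\cK}}(\U)=H^{(p,q)+1}(\U),\quad H^{2}_{D_{\cK}}(\U)=H^{p+1,q+1}_{\rm A}(\U),
\]
and likewise $H^{-1}_{d_{\cK}}(M)=H^{(p-1,q-1)+1}(M)$... wait, more precisely $H^{-2}_{d_{\cK}}(M)$ is a Bott-Chern-type group, $H^{-1}_{d_{\cK}}(M)$ is a third-cohomology-type group, and $H^{0}_{d_{\cK}}(M)=H^{p,q}_{\rm BC}(M)$; the relative groups obey the same pattern. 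The only subtlety is that the complex (\ref{mixed}) is not literally periodic — the type $(p,q)$ shifts — so one must track the shifting superscripts carefully; but this is purely notational.

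Then I would simply write down Theorem \ref{thlongex} for this $\cK^{\bullet}$ in the range of degrees $r-1,r,r+1,\dots$ centered appropriately, and translate each term via the dictionary above. Concretely, the segment
\[
\cdots\lra H^{r-1}_{d_{\cK}}(U_{0})\overset{\delta^{*}}\lra H^{r}_{D_{\cK}}(\U,U_{0})\overset{j^{*}}\lra H^{r}_{D_{\cK}}(\U)\overset{i^{*}}\lra H^{r}_{d_{\cK}}(U_{0})\lra H^{r+1}_{D_{\cK}}(\U,U_{0})\lra\cdots
\]
becomes, on taking $r$ to be the ``degree $0$'' slot, exactly
\[
\cdots\lra H^{p-1,q-1}_{\rm A}(U_{0})\overset{\delta^{*}}\lra H^{p,q}_{\rm BC}(\U,U_{0})\overset{j^{*}}\lra H^{p,q}_{\rm BC}(\U)\overset{i^{*}}\lra H^{p,q}_{\rm BC}(U_{0})\lra\cdots,
\]
since $H^{r-1}_{d_{\cK}}(U_0)$ is the Aeppli-type group $H^{p-1,q-1}_{\rm A}(U_0)$ (note that on an open set $U_0$ the \v Cech construction is trivial so these coincide with the ordinary cohomologies of $U_0$), and continuing one more period forward gives the $H^{(p,q)+1}$ terms and then the $H^{p+1,q+1}_{\rm A}$ terms. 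The description of $\delta^{*}$, $j^{*}$, $i^{*}$ is inherited verbatim from Theorem \ref{thlongex}.

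I expect the main obstacle to be nothing more than the periodicity/reindexing bookkeeping: one must verify that the degree-$(-1)$ cohomology of (\ref{mixed}) truncated to the \v Cech/relative setting really is the third cohomology and that its predecessor is Aeppli (equivalently, that a single ``period'' of (\ref{mixed}) is the complex $\cE^{(p-1,q-1)+1}\to\cE^{p,q}\to\cE^{(p,q)+1}$ defining Aeppli and the third cohomology respectively, with the Bott-Chern complex $\cE^{p-1,q-1}\to\cE^{p,q}\to\cE^{(p,q)+1}$ being the ``shifted'' window), and that the coboundary formulas (\ref{CBCcocycob}), (\ref{Acocycob}) obtained from the general $D_{\cK}$ agree — after the simplifications of Lemma \ref{lemred} and Lemma \ref{lemAe} — with the ones used to define $H^{p,q}_{\rm BC}(\U)$, $H^{p,q}_{\rm A}(\U)$ and their relative versions. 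Once that identification is in place, exactness is immediate from Theorem \ref{thlongex} and no further argument is needed.
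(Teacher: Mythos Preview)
Your approach is exactly the paper's: Theorem \ref{exactmixed} is stated there as an immediate consequence of Theorem \ref{thlongex} applied to the complex (\ref{mixed}), with no further argument given. One bookkeeping slip to fix: in (\ref{mixed}) the term immediately preceding $\cE^{p,q}$ is $\cE^{p-1,q-1}$ (with differential $\dbarde$), not $\cE^{(p-1,q-1)+1}$, so with your indexing degree $-1$ carries $\cE^{p-1,q-1}$ and is Aeppli-type (as you correctly use in the end), while degree $-2$ carries $\cE^{(p-2,q-2)+1}$ and is the third cohomology --- your displayed ``window'' and the sentence after the ``wait'' have this reversed.
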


Now we discuss the relation with the  Dolbeault case (cf. Example \ref{exdRD}.\,2).
The following two propositions follow directly from the definition\,:

\begin{proposition}\label{propbcd} {\bf 1.} The map $\cE^{p,q}_{\rm BC}(\U)\ra \cE^{p,q}(\U)$ given by $(\s_{0},\s_{1},\s_{01})\mapsto (\s_{0},\s_{1},\de\s_{01})$ induces a morphism
\[
H^{p,q}_{\rm BC}(\U)\lra H^{p,q}_{\bar\vt}(\U),
\]
which is compatible with the first morphism in {\rm (\ref{nathomos})} via the \iso s   {\rm (\ref{isobc})} and {\rm (\ref{isod})}.
\vv

\noindent
{\bf 2.} The map $\cE^{p,q}_{\rm BC}(\U,U_{0})\ra \cE^{p,q}(\U,U_{0})$ given by $(\s_{1},\s_{01})\mapsto (\s_{1},\de\s_{01})$ induces a morphism
\[
H^{p,q}_{\rm BC}(\U,U_{0})\lra H^{p,q}_{\bar\vt}(\U,U_{0}).
\]
\end{proposition}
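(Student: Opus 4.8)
The plan is to observe that both maps are morphisms of the relevant cochain complexes, so they induce the asserted \homo s on cohomology, and then, in Part 1, to check compatibility with (\ref{nathomos}) by tracing the two \iso s (\ref{isobc}) and (\ref{isod}). First I would record the explicit \v{C}ech-Dolbeault data of type $(p,q)$ coming from Example \ref{exdRD}.\,2 via the construction of Subsection \ref{subCC} with $\cK^{q}=\cE^{p,q}$ and $d_{\cK}=\bp$: a cochain is a triple in $\cE^{p,q}(U_{0})\oplus\cE^{p,q}(U_{1})\oplus\cE^{p,q-1}(U_{01})$, it is a cocycle \iff\ $\bp\s_{0}=\bp\s_{1}=0$ and $\s_{1}-\s_{0}-\bp\s_{01}=0$, and (cf.\ (\ref{CKcocycob})) it is a coboundary \iff\ $\s_{i}=\bp\mu_{i}$ for some $\mu_{i}\in\cE^{p,q-1}(U_{i})$ and $\s_{01}=\mu_{1}-\mu_{0}-\bp\mu_{01}$ for some $\mu_{01}\in\cE^{p,q-2}(U_{01})$. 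The mechanism behind the statement is simply that $\de$ carries $\cE^{p-1,q-1}(U_{01})$ into $\cE^{p,q-1}(U_{01})$, i.e.\ into the $U_{01}$-slot of a \v{C}ech-Dolbeault $(p,q)$-cochain, and that the Bott-Chern differential entering that slot, namely $\dbarde=\bp\circ\de$, is precisely ``$\de$ followed by the Dolbeault differential $\bp$''.

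Granting this, the verifications are mechanical. If $\s=(\s_{0},\s_{1},\s_{01})\in Z^{p,q}_{\rm BC}(\U)$ then $d\s_{i}=0$ forces $\bp\s_{i}=0$ by bidegree, while $\s_{1}-\s_{0}=\dbarde\s_{01}=\bp(\de\s_{01})$; hence $(\s_{0},\s_{1},\de\s_{01})$ satisfies the \v{C}ech-Dolbeault cocycle conditions. If instead $\s$ is a coboundary, so $\s_{i}=\dbarde\tau_{i}=\bp(\de\tau_{i})$ and $\s_{01}=\tau_{1}-\tau_{0}-\bp\tau_{01}^{(1)}-\de\tau_{01}^{(2)}$ as in (\ref{CBCcocycob}), then applying $\de$ and using $\de^{2}=0$ together with $\de\bp=-\bp\de$ gives $\de\s_{01}=\de\tau_{1}-\de\tau_{0}+\bp(\de\tau_{01}^{(1)})$, so that $(\s_{0},\s_{1},\de\s_{01})$ is the \v{C}ech-Dolbeault coboundary with data $\mu_{i}=\de\tau_{i}$ and $\mu_{01}=-\de\tau_{01}^{(1)}$. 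Thus the first map descends to $H^{p,q}_{\rm BC}(\U)\to H^{p,q}_{\bar\vt}(\U)$, and Part 2 is obtained by the identical computation with $\s_{0}$ and $\tau_{0}$ suppressed, using the relative cochains $\cE^{p,q}(U_{1})\oplus\cE^{p,q-1}(U_{01})$ and the same witnesses $\mu_{1}=\de\tau_{1}$, $\mu_{01}=-\de\tau_{01}^{(1)}$.

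For the compatibility assertion in Part 1, I would note that both (\ref{isobc}) and (\ref{isod}) are induced by $\o\mapsto(\o,\o,0)$, that the cochain map sends $(\o,\o,0)$ to $(\o,\o,\de 0)=(\o,\o,0)$, and that the first \homo\ of (\ref{nathomos}) is induced by the identity map of $\cE^{p,q}(M)$ (a $d$-closed $(p,q)$-form is $\bp$-closed, and $\dbarde\tau=\bp(\de\tau)$ is $\bp$-exact); hence the square in question commutes.

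There is no genuine obstacle here: the content is purely bookkeeping. The two places where a slip is easy are the sign identity $\de\bp=-\bp\de$, which is exactly what makes the term $\bp(\de\tau_{01}^{(1)})$ appear with the correct sign in the coboundary computation, and the fact that Lemma \ref{lemred} is unavailable in the relative setting; accordingly, in Part 2 one must retain the full coboundary description carrying both $\tau_{01}^{(1)}$ and $\tau_{01}^{(2)}$, just as in the absolute computation above, rather than a reduced one.
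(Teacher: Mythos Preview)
Your proposal is correct and is precisely the direct verification from the definitions that the paper has in mind; indeed the paper does not spell out a proof at all, merely stating that Propositions \ref{propbcd} and \ref{propad} ``follow directly from the definition''. Your computations of cocycle-to-cocycle, coboundary-to-coboundary (with $\mu_{i}=\de\tau_{i}$, $\mu_{01}=-\de\tau_{01}^{(1)}$), the relative analogue, and the compatibility square are all accurate, including the sign $\de\bp=-\bp\de$.
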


\begin{proposition}\label{propad} {\bf 1.}
The map $\cE^{p,q}_{\rm A}(\U)\ra \cE^{p+1,q}(\U)$ given by 
\[
(\xi_{0},\xi_{1},\xi^{(1)}_{01},\xi^{(2)}_{01})\mapsto (\de\xi_{0},\de\xi_{1},-\de\xi^{(1)}_{01})
\]
 induces a morphism
\[
\de:H^{p,q}_{\rm A}(\U)\lra H^{p+1,q}_{\bar\vt}(\U),
\]
which is compatible with the morphism  {\rm (\ref{ad})} via the \iso s   {\rm (\ref{isoa})} and {\rm (\ref{isod})}.
\vv

\noindent
{\bf 2.} The map $\cE^{p,q}_{\rm A}(\U,U_{0})\ra \cE^{p+1,q}(\U,U_{0})$ given by 
\[
(\xi_{1},\xi^{(1)}_{01},\xi^{(2)}_{01})\mapsto (\de\xi_{1},-\de\xi^{(1)}_{01})
\]
 induces a morphism
\[
\de:H^{p,q}_{\rm A}(\U,U_{0})\lra H^{p+1,q}_{\bar\vt}(\U,U_{0}).
\]
\end{proposition}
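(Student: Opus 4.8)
The plan is to verify both assertions on the level of \v{C}ech cochains and then pass to cohomology; every step uses only the identities $d=\de+\bp$, $\de\bp=-\bp\de$, $\de^{2}=\bp^{2}=0$ together with the explicit cocycle and coboundary groups recalled above. Write $\Phi$ for the cochain map of the first assertion, so $\Phi(\xi_{0},\xi_{1},\xi^{(1)}_{01},\xi^{(2)}_{01})=(\de\xi_{0},\de\xi_{1},-\de\xi^{(1)}_{01})$; its target is the group of \v{C}ech-Dolbeault cochains of type $(p+1,q)$, namely $\cE^{p+1,q}(U_{0})\oplus\cE^{p+1,q}(U_{1})\oplus\cE^{p+1,q-1}(U_{01})$, obtained from the construction of \S\,\ref{subCC} applied to $\cK^{\bullet}=\cE^{p+1,\bullet}$, $d_{\cK}=\bp$ (see Example~\ref{exdRD}). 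Since $\Phi$ is linear, it suffices to show that it carries $Z^{p,q}_{\rm A}(\U)$ into $Z^{p+1,q}_{\bar\vt}(\U)$ and $B^{p,q}_{\rm A}(\U)$ into $B^{p+1,q}_{\bar\vt}(\U)$; this makes $\de:=\Phi_{*}$ a well-defined \homo.

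For cocycles, let $\xi\in Z^{p,q}_{\rm A}(\U)$, so by (\ref{Acocycob}) one has $\dbarde\xi_{0}=\dbarde\xi_{1}=0$ and $\xi_{1}-\xi_{0}=\bp\xi^{(1)}_{01}+\de\xi^{(2)}_{01}$. Then $\bp(\de\xi_{i})=\dbarde\xi_{i}=0$, and applying $\de$ to the gluing relation and using $\de^{2}=0$, $\de\bp=-\bp\de$ yields
\[
\de\xi_{1}-\de\xi_{0}=\de\bp\xi^{(1)}_{01}=-\bp\de\xi^{(1)}_{01}=\bp\bigl(-\de\xi^{(1)}_{01}\bigr),
\]
which is exactly the \v{C}ech-Dolbeault cocycle condition for $\Phi(\xi)$ (cf. (\ref{CKcocycob}) with $d_{\cK}=\bp$). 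This computation is the whole content of the assertion: already on $M$ it reads as the statement that $\de$ sends $\dbarde$-closed forms to $\bp$-closed ones and $\bp$- or $\de$-exact forms to $\bp$-exact ones, i.e.\ that (\ref{ad}) is well defined.

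For coboundaries, take $\xi\in B^{p,q}_{\rm A}(\U)$ in the standard form of (\ref{Acocycob}): $\xi_{i}=\bp\eta^{(1)}_{i}+\de\eta^{(2)}_{i}$ and $\xi^{(1)}_{01}=\eta^{(1)}_{1}-\eta^{(1)}_{0}-\de\eta_{01}$. Put $\tau_{i}:=-\de\eta^{(1)}_{i}$; then $\de\xi_{i}=\de\bp\eta^{(1)}_{i}=\bp\tau_{i}$ and, using $\de^{2}=0$, $-\de\xi^{(1)}_{01}=\tau_{1}-\tau_{0}$, so $\Phi(\xi)\in B^{p+1,q}_{\bar\vt}(\U)$ by Lemma~\ref{lemred} applied to the Dolbeault complex. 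Hence $\Phi$ descends to $\de:H^{p,q}_{\rm A}(\U)\to H^{p+1,q}_{\bar\vt}(\U)$. Compatibility with (\ref{ad}) is then a one-line diagram chase: the \iso s (\ref{isoa}) and (\ref{isod}) send $[\o]$ to $[(\o,\o,0,0)]$ and $[\o]$ to $[(\o,\o,0)]$ \r, while $\Phi(\o,\o,0,0)=(\de\o,\de\o,0)$ represents the image of $\de[\o]=[\de\o]$ under (\ref{isod}); so the square commutes.

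The second assertion is proved in exactly the same way for the covering associated with a closed set $S$, $U_{0}=M\ssm S$, after deleting all $U_{0}$-components. One checks that the relative map sends $Z^{p,q}_{\rm A}(\U,U_{0})$ into $Z^{p+1,q}_{\bar\vt}(\U,U_{0})$: the relation defining a relative \v{C}ech-Aeppli cocycle writes $\xi_{1}$ as $\de$ of one $U_{01}$-component plus $\bp$ of the other, so applying $\de$ kills the first term by $\de^{2}=0$ and, by $\de\bp=-\bp\de$, turns the second into $\bp$ of $(-\de)$ of the $U_{01}$-component occurring with $\bp$, which is the relative \v{C}ech-Dolbeault cocycle condition for the image cochain; substituting the standard form of $B^{p,q}_{\rm A}(\U,U_{0})$ likewise yields a relative \v{C}ech-Dolbeault coboundary. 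The only difference from the absolute case is that Lemma~\ref{lemred} is not available in the relative setting, so the auxiliary forms $\eta_{01}$ on $U_{01}$ are retained and land in the $U_{01}$-entry of the coboundary datum. I expect the sole obstacle to be bookkeeping: tracking the two distinct bidegrees appearing in the $U_{01}$-component of an Aeppli cochain and the signs produced by $\de\bp=-\bp\de$. There is no conceptual difficulty; up to a sign, the assertion merely expresses that $\de$ defines a morphism of complexes from the complex computing Aeppli cohomology to the Dolbeault complex, both absolutely and relatively.
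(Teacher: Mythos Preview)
Your verification is correct and is precisely the kind of direct computation the paper has in mind: the paper does not write out a proof at all, but simply remarks that Propositions~\ref{propbcd} and~\ref{propad} ``follow directly from the definition''. Your argument supplies exactly those missing details---checking that $\Phi$ carries Aeppli cocycles to \v{C}ech-Dolbeault cocycles and Aeppli coboundaries to \v{C}ech-Dolbeault coboundaries via the identities $\de^{2}=0$ and $\de\bp=-\bp\de$---so there is nothing to add.
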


\begin{theorem}\label{threlD} We have the following commutative diagram with exact rows\,{\rm :} 
\[
\SelectTips{cm}{}
\xymatrix@R=.7cm
@C=.4cm{
H_{\rm A}^{p-1,q-1}(\U,U_{0})\ar[r]\ar[d]^-{\de}&H_{\rm A}^{p-1,q-1}(\U)\ar[r]\ar[d]^-{\de} &H_{\rm A}^{p-1,q-1}(U_0)\ar[r]\ar[d]^-{\de} & H_{\rm BC}^{p,q}(\U, U_0) \ar[d]\ar[r] &  H_{\rm BC}^{p,q} (\U)\ar[d]\ar[r] &  H_{\rm BC}^{p,q} (U_{0})\ar[d]
\\
H_{\bar\vt}^{p,q-1}(\U,U_{0})\ar[r]&H_{\bar\vt}^{p,q-1}(\U)\ar[r] & H_{\dbar}^{p,q-1}(U_0)\ar[r] &H_{\bar\vartheta}^{p,q}(\U, U_0)  \ar[r] &  H_{\bar\vartheta}^{p,q} (\U)\ar[r] &  H_{\dbar}^{p,q} (U_{0}).}
\]
\end{theorem}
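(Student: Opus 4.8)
The plan is to recognize both rows as segments of long exact sequences already in hand, to write explicit cochain-level representatives for every arrow in the diagram, and then to verify commutativity of the five squares by direct substitution. For the rows: the top one is the portion of the long exact sequence of Theorem~\ref{exactmixed} starting at $H^{p-1,q-1}_{\rm A}(\U)$, together with the preceding term $H^{p-1,q-1}_{\rm A}(\U,U_{0})$ and the map $j^{*}$ into $H^{p-1,q-1}_{\rm A}(\U)$; this last piece is part of the long exact sequence of Theorem~\ref{thlongex} applied to the complex~(\ref{mixed}) at the spot $\cE^{p-1,q-1}$, where the cohomology of~(\ref{mixed}) is by construction the Aeppli cohomology. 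The bottom row is the segment from $H^{p,q-1}_{\bar\vt}(\U,U_{0})$ to $H^{p,q}_{\dbar}(U_{0})$ of the long exact sequence of Theorem~\ref{thlongex} applied to the Dolbeault complex $(\cE^{p,\bullet},\bp)$ (part~2 of Example~\ref{exdRD}) for the pair $(\U,U_{0})$. So exactness of both rows is free, and only commutativity of the squares requires work.

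Next I would pin down the vertical maps. The three left-hand columns carry the homomorphisms $\de$: on the two \v{C}ech columns these are the maps of Proposition~\ref{propad}, induced on cochains by $(\xi_{0},\xi_{1},\xi^{(1)}_{01},\xi^{(2)}_{01})\mapsto(\de\xi_{0},\de\xi_{1},-\de\xi^{(1)}_{01})$ and by $(\xi_{1},\xi^{(1)}_{01},\xi^{(2)}_{01})\mapsto(\de\xi_{1},-\de\xi^{(1)}_{01})$, and on the $U_{0}$-column it is the homomorphism~(\ref{ad}), $[\xi]\mapsto[\de\xi]$. The three right-hand columns carry the maps of Proposition~\ref{propbcd}, induced by $(\s_{0},\s_{1},\s_{01})\mapsto(\s_{0},\s_{1},\de\s_{01})$ and $(\s_{1},\s_{01})\mapsto(\s_{1},\de\s_{01})$, together with the natural homomorphism~(\ref{nathomos}) on $U_{0}$, which sends a $d$-closed $(p,q)$-form to its $\bp$-class. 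With all arrows in this explicit form, the four squares whose horizontal maps are the inclusions $j^{*}$ (adjoin a zero $U_{0}$-component) or the restrictions $i^{*}$ (project onto the $U_{0}$-component) commute already on cochains, since adjoining or extracting a zero component commutes with each of the componentwise operators above.

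The one square that calls for the zig-zag definition of the connecting map, and which I regard as the main point, is the middle one: the left vertical is $[\t]\mapsto[\de\t]\colon H^{p-1,q-1}_{\rm A}(U_{0})\to H^{p,q-1}_{\dbar}(U_{0})$, the right vertical is $(\s_{1},\s_{01})\mapsto(\s_{1},\de\s_{01})\colon H^{p,q}_{\rm BC}(\U,U_{0})\to H^{p,q}_{\bar\vt}(\U,U_{0})$, and the two horizontals are the respective connecting homomorphisms $\delta^{*}$. By the explicit formula $\delta^{*}[\t]=[(0,-\t)]$ of Theorem~\ref{thlongex} --- valid verbatim for~(\ref{mixed}) and for the Dolbeault complex --- one route sends $[\t]$ to $[(0,\de(-\t))]=[(0,-\de\t)]$ and the other to $\delta^{*}[\de\t]=[(0,-\de\t)]$, so this square too commutes on the cochain level. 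The only real obstacle is bookkeeping: one must keep track that the cohomology of~(\ref{mixed}) at a given spot is the Bott-Chern, the Aeppli, or the third cohomology according to the index modulo $3$, and that $\de$ sends Aeppli type $(p-1,q-1)$ to Dolbeault type $(p,q-1)$ precisely as the diagram demands; once the bidegrees and the signs in Propositions~\ref{propbcd} and~\ref{propad} are matched up, every square closes by inspection.
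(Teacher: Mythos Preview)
Your proposal is correct and follows exactly the approach the paper takes: the paper simply remarks that the top row is the segment of the sequence in Theorem~\ref{exactmixed} (itself a specialization of Theorem~\ref{thlongex} to the complex~(\ref{mixed})) and the bottom row is Theorem~\ref{thlongex} for the Dolbeault complex, leaving the commutativity of the squares implicit in the cochain-level formulas of Propositions~\ref{propbcd} and~\ref{propad}. You have merely made explicit the five square-by-square checks that the paper leaves to the reader.
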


In the above,  the first row is a part of the sequence in Theorem \ref{exactmixed} and the second row is the one in Theorem 
\ref{thlongex} for the Dolbeault complex (cf. Example \ref{exdRD}.\,2).

\lsection{Cup product,  integration and duality}\label{seccup}

Let $M$ be a complex \mfd\ of dimension $n$.

\subsection{Cup product}

Recall that the exterior product induces a bilinear map (cf. \cite{Sch})
\begin{equationth}\label{cupbca}
 H^{p,q}_{\rm BC}(M)\times H^{r,s}_{\rm A}(M)\overset\wedge\lra H^{p+r,q+s}_{\rm A}(M).
\end{equationth}


Now we try to find a product in \v{C}ech-Bott-Chern and \v{C}ech-Aeppli cohomologies that corresponds to
the above.
We define a cup product 
\[
\cE_{\rm BC}^{p,q}(\U)\times
 \cE_{\rm A}^{r,s}(\U)\overset\ssl\lra  \cE_{\rm A}^{p+r,q+s}(\U)
 \]
  by assigning to $(\s,\xi)$,
 $\s=(\s_0,\s_1,\s_{01})$, $\xi=(
\xi_0,\xi_1,\xi_{01}^{(1)},\xi_{01}^{(2)})$,
 the cochain $\s\ssl\xi$ given by

\begin{equationth}\label{cup}
\begin{aligned}
(\s\ssl\xi)_{i}&=\s_i\wedge \xi_i,\quad i=0,1,\\
(\s\ssl\xi)_{01}^{(1)}&=(-1)^{p+q}\s_0\wedge\xi_{01}^{(1)}+\de\s_{01}\wedge \xi_1 -\frac{1}{2}\de (\s_{01}\wedge \xi_1),\\
(\s\ssl\xi)_{01}^{(2)}&=(-1)^{p+q}\s_0\wedge\xi_{01}^{(2)}-\dbar\s_{01}\wedge \xi_1  +\frac{1}{2}\dbar (\s_{01}\wedge \xi_1).
\end{aligned}
\end{equationth}

\begin{proposition}\label{cupcbcca}
The above cup product induces a bilinear map
\[
 H_{\rm BC}^{p,q}(\U) \times
 H_{\rm A}^{r,s}(\U)\overset\ssl\lra  H_{\rm A}^{p+r,q+s}(\U).
\]
\end{proposition}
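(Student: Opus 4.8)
The plan is to verify directly, on the cochain level, that the cup product $\ssl$ of \eqref{cup} behaves well with respect to the differentials $D_{\rm BC}$ and $D_{\rm A}$, so that it descends to cohomology. Concretely, I would establish two facts: first, that if $\s\in Z^{p,q}_{\rm BC}(\U)$ and $\xi\in Z^{r,s}_{\rm A}(\U)$ then $\s\ssl\xi\in Z^{p+r,q+s}_{\rm A}(\U)$ (compatibility with cocycles); and second, that if either $\s$ is a Bott-Chern coboundary with $\xi$ a cocycle, or $\s$ is a cocycle with $\xi$ an Aeppli coboundary, then $\s\ssl\xi$ is an Aeppli coboundary (so the induced map is well-defined on classes and bilinear). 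Both of these reduce to straightforward, if slightly tedious, computations with the defining formulas \eqref{CBCcocycob}, \eqref{Acocycob}, using only the Leibniz rules for $\de$ and $\dbar$ and the bidegree bookkeeping that produces the sign $(-1)^{p+q}$.

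For the cocycle compatibility, I would start from $d\s_i=0$ and $\dbard\xi_i=0$, which immediately give $\dbard(\s_i\wedge\xi_i)=0$ for $i=0,1$, so the ``$(\s\ssl\xi)_i$'' components satisfy the required closedness. The substantive point is the gluing relation: one must check that
\[
(\s\ssl\xi)_1-(\s\ssl\xi)_0-\bp(\s\ssl\xi)^{(1)}_{01}-\de(\s\ssl\xi)^{(2)}_{01}=0.
\]
Here I would substitute $\s_1-\s_0=\dbarde\s_{01}$ and $\xi_1-\xi_0=\bp\xi^{(1)}_{01}+\de\xi^{(2)}_{01}$, expand $\s_1\wedge\xi_1-\s_0\wedge\xi_0$ by writing it as $\s_1\wedge(\xi_1-\xi_0)+(\s_1-\s_0)\wedge\xi_0$, and match the result against $\bp$ and $\de$ applied to the explicit formulas for $(\s\ssl\xi)^{(1)}_{01}$ and $(\s\ssl\xi)^{(2)}_{01}$. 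The terms $\de\s_{01}\wedge\xi_1$ and $-\dbar\s_{01}\wedge\xi_1$ are precisely designed so that $\bp$ of the first plus $\de$ of the second reconstruct $\dbarde\s_{01}\wedge\xi_1=(\s_1-\s_0)\wedge\xi_1$; the $\pm\frac12\,\de(\s_{01}\wedge\xi_1)$ and $\pm\frac12\,\dbar(\s_{01}\wedge\xi_1)$ correction terms cancel against each other (note $\bp\de=-\de\bp$) and also absorb the pieces where $\de$ or $\dbar$ hits $\xi_1$ rather than $\s_{01}$, using $\dbard\xi_1=0$. The $(-1)^{p+q}$ factor on the $\s_0\wedge\xi^{(k)}_{01}$ terms is what makes $\bp,\de$ pass $\s_0$ (a $(p,q)$-form, closed) without cost. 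This is the step I expect to be the main obstacle: it is a genuine bookkeeping computation, and one must be careful that the half-coefficients and signs conspire correctly; having Lemma \ref{lemAe} available (the relaxed coboundary description $B'$) gives extra room should the naive expansion land a few terms off from the strict form of $B^{p,q}_{\rm A}(\U)$.

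For well-definedness on classes, I would use the simplified coboundary descriptions already noted in the text — by Lemma \ref{lemred} one may take $\s=D_{\rm BC}(\tau_0,\tau_1,0)$, i.e. $\s_i=\dbarde\tau_i$, $\s_{01}=\tau_1-\tau_0$, and similarly by Lemma \ref{lemAe} an Aeppli coboundary $\xi$ may be taken with $\eta_{01}=0$. Plugging a Bott-Chern coboundary $\s$ and an Aeppli cocycle $\xi$ into \eqref{cup} and using $\dbard\xi_i=0$, I would exhibit $\s\ssl\xi$ explicitly as $D_{\rm A}$ of a cochain built from $\tau_i\wedge\xi_i$ and the mixed terms; the identity $\dbarde\tau_i\wedge\xi_i=\pm\bp\de(\tau_i\wedge\xi_i)$ modulo terms where a derivative hits $\xi_i$ (which vanish or are themselves $\bp$- or $\de$-exact by $\dbard\xi_i=0$) is the engine. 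The symmetric case, $\s$ a cocycle and $\xi=D_{\rm A}(\dots)$ a coboundary, is analogous, now using $d\s_i=0$ so that $\s_i\wedge(\bp\eta^{(1)}_i+\de\eta^{(2)}_i)=\bp(\pm\s_i\wedge\eta^{(1)}_i)+\de(\pm\s_i\wedge\eta^{(2)}_i)$. Once both inclusions $Z\ssl Z\subset Z$ and $(B\ssl Z)\cup(Z\ssl B)\subset B$ are in hand, bilinearity of the induced map is inherited from the evident bilinearity of the wedge-based formula \eqref{cup}, and the proposition follows. Finally, I would remark (or leave to the reader) that under the isomorphisms \eqref{isobc} and \eqref{isoa} this product corresponds to \eqref{cupbca}, which can be checked by tracing the partition-of-unity inverses through a cocycle of the special form $(\o,\o,0)$.
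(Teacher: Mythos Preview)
Your plan is correct and follows essentially the same approach as the paper: the paper likewise reduces the proposition to the three inclusions $Z\ssl Z\subset Z$, $B\ssl Z\subset B$, and $Z\ssl B\subset B$, uses Lemma~\ref{lemred} to take the simplified coboundary representatives $\s_{01}=\tau_1-\tau_0$ (resp.\ $\xi_{01}^{(k)}=\eta_1^{(k)}-\eta_0^{(k)}$), and then exhibits explicit Aeppli primitives built from $\tau_i\wedge\xi_i$ and $\s_i\wedge\eta_i^{(k)}$, invoking the relaxed description $B'$ of Lemma~\ref{lemAe} to absorb the extra $\bp$- and $\de$-exact $(01)$-terms. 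One tiny correction: the simplification $\eta_{01}=0$ for the Aeppli coboundary is justified by Lemma~\ref{lemred}, not Lemma~\ref{lemAe}; the latter is used, as you also anticipate, on the \emph{target} side.
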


\begin{proof} We need to show that
\begin{enumerate}
\item[(1)] If $\s\in Z_{\rm BC}^{p,q}(\U)$ and $\xi\in Z_{\rm A}^{r,s}(\U)$, then $\s\ssl\xi\in Z_{\rm A}^{p+r,q+s}(\U)$.
\item[(2)] If $\s\in B_{\rm BC}^{p,q}(\U)$ and $\xi\in Z_{\rm A}^{r,s}(\U)$, then $\s\ssl\xi\in B_{\rm A}^{p+r,q+s}(\U)$.
\item[(3)] If $\s\in Z_{\rm BC}^{p,q}(\U)$ and $\xi\in B_{\rm A}^{r,s}(\U)$, then $\s\ssl\xi\in B_{\rm A}^{p+r,q+s}(\U)$.
\end{enumerate}

First, we note that, for a $(p,q)$-form $\o$ and an $(r,s)$-form $\t$,
\begin{equationth}\label{eqdbarde}
\dbarde(\o\wedge\t)=\dbarde\o\wedge\t+(-1)^{p+q+1}\de\o\wedge\bp\t+(-1)^{p+q}\bp\o\wedge\de\t+\o\wedge\dbarde\t.
\end{equationth}

The statement (1) follows from direct computations using 
(\ref{eqdbarde}).

To prove (2), take $\s\in B_{\rm BC}^{p,q}(\U)$ and $\xi\in Z_{\rm A}^{r,s}(\U)$. Thus $\s_{i}=\dbarde\tau_{i}$,
$i=0, 1$, and $\s_{01}=\tau_{1}-\tau_{0}$ (cf. Lemma \ref{lemred}). Also $\dbarde\xi_{i}=0$, $i=0,1$, and $\xi_{1}-\xi_{0}-\bp\xi_{01}^{(1)}
-\de\xi_{01}^{(2)}=0$. Then we may write
\[
(\s\ssl\xi)_{i}=\bp\a_{i}^{(1)}+\de\a_{i}^{(2)},
\]
where
\[
\a_{i}^{(1)}=-(-1)^{p+q}\tau_{i}\wedge\de\xi_{i}+\frac 1 2 \de(\tau_{i}\wedge\xi_{i}),\quad
\a_{i}^{(2)}=(-1)^{p+q}\tau_{i}\wedge\bp\xi_{i}-\frac 1 2 \bp(\tau_{i}\wedge\xi_{i}).
\]
We compute 
\[
(\s\ssl\xi)_{01}^{(1)}=\a_{1}^{(1)}-\a_{0}^{(1)}-\bp\a_{01}^{(11)}-\de\a_{01}^{(12)},\quad
(\s\ssl\xi)_{01}^{(2)}=\a_{1}^{(2)}-\a_{0}^{(2)}-\bp\a_{01}^{(21)}-\de\a_{01}^{(22)},
\]
where
\[
\begin{aligned}
\a_{01}^{(11)}&=
-(-1)^{p+q}\de\tau_{0}\wedge\xi_{01}^{(1)},\qquad
\a_{01}^{(12)}=\a_{01}^{(21)}=-\frac 12 \tau_{0}\wedge(\bp\xi_{01}^{(1)}-\de\xi_{01}^{(2)}),\\
\a_{01}^{(22)}&=
(-1)^{p+q}\bp\tau_{0}\wedge\xi_{01}^{(2)}.
\end{aligned}
\]
Thus $\s\ssl\xi\in B^{p+r,q+s}_{\rm A}(\U)$ (cf. Lemma \ref{lemAe}).

To prove (3), take $\s\in Z_{\rm BC}^{p,q}(\U)$ and $\xi\in B_{\rm A}^{r,s}(\U)$. Thus $d\s_{i}=0$, $i=0, 1$,
$\s_{1}-\s_{0}-\dbarde\s_{01}=0$ and $\xi_{i}=\bp\eta_{i}^{(1)}+\de\eta_{i}^{(2)}$, $\xi_{01}^{(k)}=\eta_{1}^{(k)}
-\eta_{0}^{(k)}$, $i=0,1$, $k=1,2$,  (cf. Lemma \ref{lemred}). Then we may write
\[
(\s\ssl\xi)_{i}=\bp\beta_{i}^{(1)}+\de\beta_{i}^{(2)},\qquad \beta_{i}^{(k)}=(-1)^{p+q}\s_{i}\wedge\eta_{i}^{(k)}.
\]
We compute
\[
(\s\ssl\xi)_{01}^{(1)}=\beta_{1}^{(1)}-\beta_{0}^{(1)}-\bp\beta_{01}^{(11)}-\de\beta_{01}^{(12)},\quad
(\s\ssl\xi)_{01}^{(2)}=\beta_{1}^{(2)}-\beta_{0}^{(2)}-\bp\beta_{01}^{(21)}-\de\beta_{01}^{(22)},
\]
where
\[
\begin{aligned}
\beta_{01}^{(11)}&=(-1)^{p+q}\de\s_{01}\wedge\eta_{1}^{(1)}, \qquad
\beta_{01}^{(12)}=\beta_{01}^{(21)}=\frac 1 2 \s_{01}\wedge(\bp\eta_{1}^{(1)}-\de\eta_{1}^{(2)}),\\
\beta_{01}^{(22)}&=-(-1)^{p+q}\bp\s_{01}\wedge\eta_{1}^{(2)}.
\end{aligned}
\]
Thus $\s\ssl\xi\in B^{p+r,q+s}_{\rm A}(\U)$.
\end{proof}

Note that the cup product in Proposition \ref{cupcbcca} is compatible with (\ref{cupbca}) via the \iso s
(\ref{isobc}) and (\ref{isoa}).

\begin{remark}\label{remcobBC} If we do not use  Lemma \ref{lemred} and write $\s_{01}=\tau_{1}-\tau_{0}-\bp\tau_{01}^{(1)}-\de\tau_{01}^{(2)}$ in the step (2) above, the terms $(\s\ssl\xi)_{i}$, $i=0, 1$, remain the same and we have
\begin{equationth}\label{prod}
\begin{aligned}
(\s\ssl\xi)_{01}^{(1)}&=\a_{1}^{(1)}-\a_{0}^{(1)}-\bp(\a_{01}^{(11)}+\gamma_{01}^{(11)})-\de(\a_{01}^{(12)}+\gamma_{01}^{(12)}),\\
(\s\ssl\xi)_{01}^{(2)}&=\a_{1}^{(2)}-\a_{0}^{(2)}-\bp(\a_{01}^{(21)}+\gamma_{01}^{(21)})-\de(\a_{01}^{(22)}+\gamma_{01}^{(22)}),
\end{aligned}
\end{equationth}
where $\a_{01}^{(kl)}$, $k, l=1, 2$, are as before and
\[
\begin{aligned}
\gamma_{01}^{(11)}&=-(-1)^{p+q}\tau_{01}^{(1)}\wedge\de\xi_{1},\qquad
\gamma_{01}^{(12)}=\gamma_{01}^{(21)}=\frac 12 (\bp\tau_{01}^{(1)}-\de\tau_{01}^{(2)})\wedge\xi_{1},\\
\gamma_{01}^{(22)}&=
(-1)^{p+q}\tau_{01}^{(2)}\wedge\bp\xi_{1}.
\end{aligned}
\]
Thus $\s\ssl\xi$ is still in $B^{p+r,q+s}_{\rm A}(\U)$ (cf. Lemma \ref{lemAe}).
This is necessary when we consider the relative case (cf. the proof of Proposition \ref{propcuprel} below).
\end{remark}

Now we consider the relative case. Thus let $S$ be a closed set in $M$. Let $U_{0}=M\ssm S$ and $U_{1}$ a \nbd\ of
$S$ and consider the  covering $\U=\{U_{0},U_{1}\}$ as before.
By setting $\s_{0}=0$ in  (\ref{cup}), we see that it induces 
 a cup product 
 \[
 \cE_{\rm BC}^{p,q}(\U,U_0)\times
 \cE^{r,s}(U_{1})\overset\ssl\lra  \cE_{\rm A}^{p+r,q+s}(\U,U_0)
 \]
 given by
 \begin{equationth}\label{cup2}
(\s,\xi)\mapsto (\s_1\wedge \xi_1,
\de\s_{01}\wedge \xi_1  -\frac{1}{2}\de (\s_{01}\wedge \xi_1),  -  \bp\s_{01}\wedge \xi_1 +\frac{1}{2}\bp (\s_{01}\wedge \xi_1) ).
\end{equationth}
It does not induce the cup product on the corresponding cohomologies, since Lemma~\ref{lemAe} does not apply in the relative case. However, for our purpose it suffices to
consider the case $p+r=q+s=n$ and in this case, we can make the receiving cohomology the relative de~Rham
cohomology (cf. Example \ref{exdRD}.\,1). 
Namely, if we consider the composition of 
\eqref{cup2} and $\cE^{n,n}_{A}(\U,U_{0})\ra \cE^{2n}(\U,U_{0})$ given by $(\xi_{1},\xi_{01}^{(1)},\xi_{01}^{(2)})\mapsto (\xi_{1},\xi_{01}^{(1)}+\xi_{01}^{(2)})$ (cf. Proposition \ref{proprA}), we have a
bilinear map
 \[
 \cE_{\rm BC}^{p,q}(\U,U_0)\times
 \cE^{n-p,n-q}(U_{1})\overset{\cdot}\lra  \cE^{2n}(\U,U_0)
 \]
 given by
 \begin{equationth}\label{cup3}
(\s,\xi_{1})\mapsto \s\cdot\xi_{1}=(\s_1\wedge \xi_1,
((\de-\bp)\s_{01})\wedge \xi_1  -\frac{1}{2}(\de-\bp) (\s_{01}\wedge \xi_1)).
\end{equationth}

\begin{proposition}\label{propcuprel} The above bilinear map induces a bilinear map
\[
H_{\rm BC}^{p,q}(\U,U_0)\times
H_{\rm A}^{n-p,n-q}(U_1)\overset{\cdot}\lra  H_{D}^{2n}(\U,U_0).
\]
\end{proposition}
\begin{proof} We need to show that
\begin{enumerate}
\item[(1)] If $\s\in Z_{\rm BC}^{p,q}(\U,U_{0})$ and $\xi_{1}\in Z_{\rm A}^{n-p,n-q}(U_{1})$, then $\s\cdot\xi_{1}\in Z^{2n}(\U,U_{0})$.
\item[(2)] If $\s\in B_{\rm BC}^{p,q}(\U,U_{0})$ and $\xi_{1}\in Z_{\rm A}^{n-p,n-q}(U_{1})$, then $\s\cdot\xi_{1}\in B^{2n}(\U,U_{0})$.
\item[(3)] If $\s\in Z_{\rm BC}^{p,q}(\U,U_{0})$ and $\xi_{1}\in B_{\rm A}^{n-p,n-q}(U_{1})$, then $\s\cdot\xi_{1}\in B^{2n}(\U,U_{0})$.
\end{enumerate}
In the above $Z_{\rm A}^{n-p,n-q}(U_{1})$ and $B_{\rm A}^{n-p,n-q}(U_{1})$ denote the Aeppli cocycles and
coboundaries on $U_{1}$ and $Z^{2n}(\U,U_{0})$ and $B^{2n}(\U,U_{0})$ the relative de~Rham 
cocycles and
coboundaries.

To prove (1), take $\s\in Z_{\rm BC}^{p,q}(\U,U_{0})$ and $\xi_{1}\in Z_{\rm A}^{n-p,n-q}(U_{1})$. 
Thus $d\s_{1}=0$, 
$\s_{1}-\dbarde\s_{01}=0$
and $\dbarde\xi_{1}=0$. We have $d(\s\cdot\xi_{1})_{1}=d(\s_{1}\wedge\xi_{1})=0$ by the degree reason.
By \eqref{eqdbarde} and the degree reason we compute $(\s\cdot\xi_{1})_{1}-d(\s\cdot\xi_{1})_{01}=0$.
Thus $\s\cdot\xi_{1}\in Z^{2n}(\U,U_{0})$.

To prove (2), take $\s\in B_{\rm BC}^{p,q}(\U,U_{0})$ and $\xi\in Z_{\rm A}^{n-p,n-q}(U_{1})$. Thus $\s_{1}=\dbarde\tau_{1}$ and $\s_{01}=\tau_{1}-\bp\tau_{01}^{(1)}-\de\tau_{01}^{(2)}$ (cf. Remark \ref{remcobBC}). Also $\dbarde\xi_{1}=0$.
Then we may write, again by the degree reason,
\[
(\s\cdot\xi_{1})_{1}=d(\a_{1}^{(1)}+\a_{1}^{(2)}),
\]
where  $\a_{1}^{(k)}$, $k=1,2$, are as in the proof of Proposition~\ref{cupcbcca}.
In \eqref{prod}, we have  $\a_{01}^{(kl)}=0$ as $\tau_{0}=0$. Also $\bp\gamma_{01}^{(11)}=d\gamma_{01}^{(11)}$ and $\de\gamma_{01}^{(22)}=d\gamma_{01}^{(22)}$ as $\gamma_{01}^{(11)}$ and 
$\gamma_{01}^{(22)}$ are $(n,n-2)$ and $(n-2,n)$-forms, \r. Thus we have
\[
(\s\cdot\xi_{1})_{01}=\a_{1}^{(1)}+\a_{1}^{(2)}-d(\gamma_{01}^{(11)}+\gamma_{01}
+\gamma_{01}^{(22)}),
\]
where $\gamma_{01}=\gamma_{01}^{(12)}=\gamma_{01}^{(21)}$.
This shows $\s\cdot\xi_{1}\in B^{2n}(\U,U_{0})$.

To prove (3), take $\s\in Z_{\rm BC}^{p,q}(\U,U_{0})$ and $\xi_{1}\in B_{\rm A}^{n-p,n-q}(U_{1})$. Thus $d\s_{1}=0$, 
$\s_{1}-\dbarde\s_{01}=0$ and $\xi_{1}=\bp\eta_{1}^{(1)}+\de\eta_{1}^{(2)}$. Then we have,
with $\beta_{1}^{(k)}$ and $\beta_{01}^{(kl)}$ as in the proof of Proposition~\ref{cupcbcca},
\[
(\s\cdot\xi_{1})_{1}=d(\beta_{1}^{(1)}+\beta_{1}^{(2)}),\qquad (\s\cdot\xi_{1})_{01}=\beta_{1}^{(1)}+
\beta_{1}^{(2)}-d(\beta_{01}^{(11)}+\beta_{01}+\beta_{01}^{(22)}),
\]
where $\beta_{01}=\beta_{01}^{(12)}=\beta_{01}^{(21)}$,
which shows $\s\cdot\xi_{1}\in B^{2n}(\U,U_{0})$.
\end{proof}

\subsection{Integration}

We may define integration on  \v{C}ech-Bott-Chern or  \v{C}ech-Aeppli cohomology by making use of
the integration theory on \v Cech-de~Rham cohomology.
The \v Cech-de~Rham cohomology and its integration theory may be developed  for an arbitrary covering of a $C^{\infty}$ \mfd. Here we 
briefly recall the theory
in our situation and refer to \cite{Le1} and \cite{Sw1} for the general case and details. 

Let $M$ be a complex \mfd\ of dimension $n$ and $\U=\{U_{0},U_{1}\}$ a covering of $M$ as before.
The \v Cech-de~Rham cohomology $H^{r}_{D}(\U)$ is defined as in Example \ref{exdRD}.\,1. 
Let $\{R_0,R_1\}$ be a ``honeycomb system'' 
 adapted to $\U$.
In our case we may assume that each $R_i$ is a closed real $2n$-dimensional \mfd\ with $C^{\infty}$ boundary
$\de R_{i}$ in $U_{i}$, $i=0,1$, \st\ $R_{0}\cup R_{1}=M$ and  $\op{Int}R_{0}\cap\op{Int}R_{1}=\emptyset$, where ``$\op{Int}$'' means the interior. We set $R_{01}=R_{0}\cap R_{1}$, which 
coincides with $\de R_{0}$ 
and is endowed with the orientation of $\de R_{0}$
so that $R_{01}=\de R_{0}=-\de R_{1}$, as oriented \mfd s. 

Suppose $M$ is compact. Then we may assume that $R_{0}$ and $R_{1}$ are compact and we have the integration
\[
\int_{M}:\cE^{2n}(\U)\lra \C\qquad\text{given by}\ \ \int_{M}\z=\int_{R_0}\z_0+\int_{R_1}\z_1+\int_{R_{01}}\z_{01},
\]
which induces 
the integration on $H^{2n}_{D}(\U)$. It is compatible with the integration on the de~Rham cohomology
$H^{2n}_{d}(M)$ via the \iso\ (\ref{isodr}).

In the relative case we proceed as follows. Thus let $S$ be a closed set in $M$. Let $U_{0}=M\ssm S$ and $U_{1}$ a \nbd\ of
$S$ and consider the covering $\U=\{U_{0},U_{1}\}$ of $M$ as before. We have the relative de~Rham cohomology
$H^{r}_{D}(\U,U_{0})$  as in Example \ref{exdRD}.\,1.
Suppose $S$ is compact ($M$ may not be). Let $R_{1}$ be a $2n$-dimensional compact sub\mfd\ of $M$
with $C^{\infty}$ boundary containing $S$ in its interior and set $R_{01}=-\partial R_{1}$. Then we have the integration
\begin{equationth}\label{intdrr}
\int_{M}:H^{2n}_{D}(\U,U_{0})\lra \C\qquad\text{induced by}\ \ \int_{M}\z=\int_{R_1}\z_1+\int_{R_{01}}\z_{01}.
\end{equationth}

Now we consider the Aeppli case. First suppose $M$ is compact. Then the
integration on $H^{n,n}_{\rm A}(M)$ is defined using the identity $H^{n,n}_{\rm A}(M)=H^{2n}_{d}(M)$. The integration on the \v{C}ech-Aeppli cohomology is defined using
Proposition \ref{propCA}.
Namely,
 we have the integration
 \begin{equationth}\label{intA} 
 \int_{M}:H_{\rm A}^{n,n}(\U)\lra\C\qquad\text{induced by}\ \ 
\int_M\xi=\int_{R_0}\xi_0+\int_{R_1}\xi_1
+\int_{R_{01}}(\xi_{01}^{(1)}+\xi_{01}^{(2)}).
\end{equationth}

In the relative case we may use Proposition \ref{proprA}.
Thus if $S$ is compact,
we have the integration
\[
\int_M: H_{\rm A}^{n,n}(\U,U_0)\lra\C\qquad\text{induced by}\ \ 
\int_M\xi=\int_{R_1}\xi_1+\int_{R_{01}}(\xi_{01}^{(1)}+\xi_{01}^{(2)}).
\]

\begin{remark} Noting that there is a canonical morphism $H^{n,q}_{\bar\vartheta}(\U)\ra H^{n+q}_{D}(\U)$,
we may define the integration on $H^{n,n}_{\rm BC}(\U)$ as the composition
\[
H^{n,n}_{\rm BC}(\U)\lra H^{n,n}_{\bar\vartheta}(\U)\lra H^{2n}_{D}(\U)\overset{\int_{M}}\lra\C.
\]
\end{remark}

\subsection{Duality morphisms}

If $M$ is compact, we have a bilinear map
\begin{equationth}\label{ABCpair}
 H^{p,q}_{\rm BC}(M)\times H^{n-p,n-q}_{\rm A}(M)\overset\wedge\lra H^{n,n}_{\rm A}(M)\overset{\int_M}\lra\C,
\end{equationth}
which is non-degenerate  so that we have an \iso\ (cf. \cite{Sch})
\begin{equationth}\label{ABCdual}
\varPhi:H^{p,q}_{\rm BC}(M)\overset{\sim}\lra H^{n-p,n-q}_{\rm A}(M)^*.
\end{equationth}

Now we consider the duality between \v{C}ech-Bott-Chern and \v{C}ech-Aeppli cohomologies.
If $M$ is compact, we have a pairing (cf. Proposition \ref{cupcbcca} and \eqref{intA})\,:
\[
 H_{\rm BC}^{p,q}(\U) \times H_{\rm A}^{n-p,n-q}(\U)\overset\ssl\lra H_{\rm A}^{n,n}(\U)\overset{\int_{M}}\lra  \C,
\]
which is compatible with (\ref{ABCpair}). Thus we have an \iso
\[
 \varPhi:H_{\rm BC}^{p,q}(\U) \overset\sim\lra H_{\rm A}^{n-p,n-q}(\U)^*\simeq H_{\rm A}^{n-p,n-q}(M)^*.
\]

In the relative case, suppose $S$ is compact. 
We then have a pairing (cf. Proposition~\ref{propcuprel} and \eqref{intdrr})\,:
\[
H_{\rm BC}^{p,q}(\U,U_0)\times
 H_{\rm A}^{n-p,n-q}(U_1)\overset\cdot\lra  H_{D}^{2n}(\U,U_0)\overset{\int_M}\lra \C,
\]
which induces  a morphism $H_{\rm BC}^{p,q}(\U,U_0)\ra H_{\rm A}^{n-p,n-q}(U_1)^*$. If we set
\[
H_{\rm A}^{n-p,n-q}[S]=\underset{\underset{U_{1}\supset S}\lra}\lim\, H_{\rm A}^{n-p,n-q}(U_1),
\]
the inductive limit over the open \nbd s $U_{1}$ of $S$, we have a morphism (cf. Corollary \ref{corind})
\begin{equationth}\label{ABC}
\varPsi:H_{\rm BC}^{p,q}(\U,U_0)\lra H_{\rm A}^{n-p,n-q}[S]^*.
\end{equationth}

\begin{proposition}\label{commutativity} If $M$ is compact, we have the following commutative diagram\,:
\[
\SelectTips{cm}{}
\xymatrix
{H_{\rm BC}^{p,q}(\U,U_0)\ar[r]^-{j^{*}} \ar[d]^-{\varPsi}& H_{\rm BC}^{p,q}(M)\ar[d]^-{\varPhi}_-{\wr}\\
H_{\rm A}^{n-p,n-q}[S]^*\ar[r]^-{i_*} & H_{\rm A}^{n-p,n-q}(M)^*.}
\]
\end{proposition}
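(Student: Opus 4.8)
The diagram concerns the map $j^*$ on relative Bott-Chern cohomology (which classifies, via the long exact sequence of Theorem \ref{thlongex} applied to the Bott-Chern complex), the duality homomorphism $\varPsi$ of (\ref{ABC}), the Poincar\'e-type isomorphism $\varPhi$ of (\ref{ABCdual}), and the dual $i_*$ of the restriction map $i^*:H_{\rm A}^{n-p,n-q}[S]^*\to H_{\rm A}^{n-p,n-q}(M)^*$ induced by $H^{n-p,n-q}_{\rm A}(M)\to H^{n-p,n-q}_{\rm A}[S]$ (this last map coming from restricting a global Aeppli class to a neighborhood of $S$). The strategy is to chase an explicit relative cocycle around the square and compare the two resulting functionals on $H^{n-p,n-q}_{\rm A}(M)$ directly, using the \v Cech-level formulas for the cup product and the integration.

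\textbf{Key steps.} First I would fix a relative Bott-Chern cocycle $\s=(\s_1,\s_{01})\in Z^{p,q}_{\rm BC}(\U,U_0)$, so $d\s_1=0$ and $\s_1=\dbarde\s_{01}$ on $U_{01}$. Going right then down: $j^*[\s]$ is represented, under the isomorphism (\ref{isobc}), by the global form $\o=\rho_1\s_1+\de\rho_0\wedge\dbar\s_{01}-\dbar\rho_0\wedge\de\s_{01}-\dbarde\rho_0\wedge\s_{01}$ (the inverse of (\ref{isobc}) applied to $(0,\s_1,\s_{01})$); then $\varPhi[j^*\s]$ is the functional $[\eta]\mapsto\int_M\o\wedge\eta$ on $H^{n-p,n-q}_{\rm A}(M)$. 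Going down then right: $\varPsi[\s]$ is the functional on $H^{n-p,n-q}_{\rm A}[S]$ given, via (\ref{cup2}) and (\ref{intrela}), by $[\xi]\mapsto\int_{R_1}\s_1\wedge\xi+\int_{R_{01}}\bigl((\de\s_{01}\wedge\xi-\tfrac12\de(\s_{01}\wedge\xi))+(-\dbar\s_{01}\wedge\xi+\tfrac12\dbar(\s_{01}\wedge\xi))\bigr)$ for $\xi$ a representative defined on $U_1$; then $i_*$ precomposes with the restriction $H^{n-p,n-q}_{\rm A}(M)\to H^{n-p,n-q}_{\rm A}[S]$, i.e. one evaluates this same formula on (the germ near $S$ of) a global Aeppli representative $\xi$ of $[\eta]$. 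The claim is that the two functionals agree on $H^{n-p,n-q}_{\rm A}(M)$.

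\textbf{The comparison.} With $\xi=\eta$ a global representative, $\o\wedge\eta$ differs from a convenient global $2n$-form only by exact terms and terms supported where $\de\rho_0$ (hence $\rho_0$) is non-constant, i.e. in $R_{01}$. Concretely, on $R_1$ one has $\rho_1\equiv 1$ so the $\rho$-dependent correction terms in $\o$ vanish and $\o\wedge\eta=\s_1\wedge\eta$; on $R_0$ one has $\rho_1\equiv 0$ and again the correction terms vanish, giving $\o\wedge\eta=0$; only across the collar $R_{01}$ do the derivative-of-$\rho$ terms contribute, and there the identity $\s_1=\dbarde\s_{01}$ together with the product rule (\ref{eqdbarde}) (applied to $\s_{01}\wedge\eta$, using $d\eta$ and the Aeppli cocycle condition $\dbarde\eta=0$ only to the extent needed) lets one rewrite $\rho_1\s_1\wedge\eta$ plus the $\de\rho_0,\dbar\rho_0,\dbarde\rho_0$ terms wedged with $\eta$ as $d$ of something plus the boundary integrand appearing in the $\varPsi$-formula. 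Integrating $\int_M=\int_{R_0}+\int_{R_1}+\int_{R_{01}}$ and applying Stokes on $R_1$ (with $\de R_1=-R_{01}$) converts the exact part into the $\int_{R_{01}}$ contribution, producing exactly $\int_{R_1}\s_1\wedge\eta+\int_{R_{01}}(\cdots)$, which is $\varPsi[\s]$ evaluated on $[\eta]$ restricted near $S$. The symmetrized half-$\de$, half-$\dbar$ bookkeeping in (\ref{cup2}) is precisely what is needed to make the boundary terms match the Aeppli integration formula (\ref{intrela}), which sums $\xi_{01}^{(1)}+\xi_{01}^{(2)}$.

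\textbf{Main obstacle.} The routine part is unwinding the definitions; the delicate point is the sign and normalization bookkeeping in the collar integral — keeping track of the $(-1)^{p+q}$ factors from (\ref{cup}), the orientation $R_{01}=\de R_0=-\de R_1$, and verifying that the $\tfrac12\de$ and $\tfrac12\dbar$ terms in (\ref{cup2}) combine with the $\de\rho_0\wedge\dbar\s_{01}$ and $\dbar\rho_0\wedge\de\s_{01}$ terms of the inverse of (\ref{isobc}) to give a genuine $d$-exact form (so Stokes applies cleanly) rather than merely a $\dbarde$-exact one. I expect this to reduce, after a short computation, to the product-rule identity (\ref{eqdbarde}) and the fact that both $\varPhi$ and $\varPsi$ are built from the \emph{same} cup product (Proposition \ref{cupcbcca} and its relative version) and the \emph{same} de~Rham integration; once the collar term is pinned down, commutativity is immediate and independent of the choices of $U_1$, $\{\rho_0,\rho_1\}$ and $\{R_0,R_1\}$ by Corollary \ref{corind}.
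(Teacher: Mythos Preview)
The paper states this proposition without proof, presumably because it is regarded as formal once the \v Cech-level cup product and integration are in place. Your plan is correct but unnecessarily heavy: the detour through the global form $\o$ (via the inverse of (\ref{isobc})) and the ensuing collar/Stokes computation is precisely the ``main obstacle'' you yourself flag, and it can be avoided entirely.

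The cleaner route is already implicit in your final paragraph: stay at the \v Cech level throughout. Since the \v Cech pairing $H^{p,q}_{\rm BC}(\U)\times H^{n-p,n-q}_{\rm A}(\U)\to\C$ is compatible with (\ref{ABCpair}) via (\ref{isobc}) and (\ref{isoa}) (as noted right after Proposition \ref{cupcbcca}), one may compute $\varPhi(j^*[\s])$ on $[\eta]\in H^{n-p,n-q}_{\rm A}(M)$ by cupping the \v Cech representative $(0,\s_1,\s_{01})$ with $\tilde\eta=(\eta,\eta,0,0)$ via (\ref{cup}) and integrating by the \v Cech-Aeppli formula. But with $\s_0=0$ and $\xi_{01}^{(1)}=\xi_{01}^{(2)}=0$, the cup product (\ref{cup}) specializes literally to (\ref{cup2}) with $\xi_1=\eta|_{U_1}$; the resulting cochain already lies in $\cE^{n,n}_{\rm A}(\U,U_0)$ and its integral is the relative integral (\ref{intrela}). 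This is exactly $\varPsi[\s]$ evaluated on $i^*[\eta]$, i.e.\ $(i_*\varPsi[\s])([\eta])$. No partition of unity, no Stokes, and no $(-1)^{p+q}$ bookkeeping are needed; commutativity is tautological once one observes that the relative pairing is the restriction of the absolute one. Your ``Key steps'' and ``Comparison'' sections can therefore be replaced by this one-line observation.
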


\lsection{Characteristic classes in  \v Cech-Bott-Chern cohomology}\label{secchBC}

\subsection{Chern forms}

We start by briefly recalling the Chern-Weil theory of
characteristic classes. For details
we refer to \cite{BB}, \cite{BC}, \cite{MS} and \cite{Sw1}.

Let $M$ be a $C^\infty$ \mfd\ and $E\ra M$  a $C^\infty$ complex \vb\ of rank $l$.
For an open set $U$ in  $M$, we denote by
$ \cE^r(U,E)$  the vector space of $E$-valued $r$-forms on $U$, i.e., $C^{\infty}$ sections of the bundle
$\bigwedge^r\tau^*\otimes E$ on $U$, where $\tau=T^c_{\R}M$ denotes
 the complexification of the real tangent bundle $T_{\R}M$ of $M$. 

 A  {\em connection} for $E$  is a ${\C}$-linear map
\[
\na: \cE^0(M,E)\lra  \cE^1(M,E)
\]
which is a derivation\,:
$\na(fs)=df\otimes s+f\na(s)$ for $ f\in \cE^0(M)$, $s\in  \cE^0(M,E)$.
For 
a vector field $u$, we denote $\na(s)(u)$ by $\na_u(s)$, which is in $ \cE^0(M,E)$.
A connection $\na$ induces a derivation
$\na: \cE^1(M,E)\ra  \cE^2(M,E)$ and the composition
\[
K=\na\circ\na: \cE^0(M,E)\lra  \cE^2(M,E)
\]
is the {\em curvature} of $\na$. It is $ \cE^0(M)$-linear so that it can be thought of as a $2$-form with
coefficients in $\op{Hom}(E,E)$. Thus, for the $p$-th elementary invariant polynomial $\s_p$, $p=1,\dots,l$,
$\s_p(K)$ is a $2p$-form on $M$, which is shown to be closed. The {\em $p$-th Chern form} of $\na$ is defined by
\[
c^p(\na)=\Bigl(\frac{\sqrt{-1}}{2\pi}\Bigr)^p\s_p(K).
\]

More generally, if $\varphi$ is an invariant polynomial
homogeneous of degree $k$, we have a well-defined $2k$-form $\varphi(K)$, which is closed. By a slight abuse of
notation, we set
\[
\varphi(\na)=\Bigl(\frac{\sqrt{-1}}{2\pi}\Bigr)^k\varphi(K).
\]
Alternatively, for such $\varphi$,
there is a polynomial $P$ such that $\varphi=P(\s_1,\s_2,\dots)$. Then
$\varphi(\na)=P(c^1(\na),c^2(\na),\dots)$.

Noting that a connection is a local operator, we have local representations of the connection and curvature.  For a (local) frame $\bm{e}=(e_1,\dots,e_l)$ of $E$, the 
connection matrix $\t=(\t_{ij})$ is determined by $\na e_i=\sum_{i=1}^l \t_{ji}e_j$. 
Also the 
curvature matrix $\kappa=(\kappa_{ij})$ is determined by $K e_i=\sum_{i=1}^l \kappa_{ji}e_j$. 
We then have
\begin{equationth}\label{curvconn}
\kappa=d\t+\t\wedge\t.
\end{equationth}
By a frame change $\bm{e}'=\bm{e}A$, the connection and curvature matrices become
\[
\t'=A^{-1}\cdot dA+A^{-1}\t A,\qquad \kappa'=A^{-1}\kappa A.
\]
The form $\varphi(\na)$ above is locally given by $(\frac{\sqrt{-1}}{2\pi})^{k}\varphi(\kappa)$.

\paragraph{Bott difference form\,:}

Let $\varphi$ be as above.
For two connections $\na$ and $\na'$ for $E$,
there exists a $(2k-1)$-form $\varphi(\na,\na')$ \st\ $\varphi(\na',\na)=-\varphi(\na,\na')$
and that
\[
d\hspace{.3mm}\varphi(\na,\na')=\varphi(\na')-\varphi(\na).
\]
As to the construction of such forms, we refer to \cite{B},  \cite{BC} and \cite{Sw1}.
One of the consequences of this  is that  the class of $[\varphi(\na)]$ in $H_{d}^{2k}(M)$ is independent of the choice of the connection $\na$ for $E$. The class will be  denoted by $\varphi(E)$. In particular the class of $c^{p}(\na)$ is the $p$-th Chern class
 $c^{p}(E)$ of $E$.

\subsection{Metric connections}

\paragraph{Connections of type $\bm{(1,0)}$\,:}
Now let $M$ be a complex \mfd\ 
of dimension $n$ 
and $E\ra M$ a holomorphic \vb\ of rank $l$.
 A connection $\na$ for $E$ is  {\em of type $(1,0)$} (a  $(1,0)$-connection for short) if the entries of the connection matrix with respect to a holomorphic frame are forms of type $(1,0)$.
Note that this property does not depend on the choice of  the
holomorphic frame and that every holomorphic \vb\ admits a $(1,0)$-connection.
If $\na$ is a $(1,0)$-connection for $E$, we may write its curvature $K$ as
\[
K=K^{2,0}+K^{1,1}
\]
with $K^{2,0}$ and $K^{1,1}$ a $(2,0)$-form and a $(1,1)$-form with coefficients in ${\rm Hom}(E,E)$. Locally, if $\t$ and $\kappa$ are  the connection and  curvature
matrices of $\na$ \wrt\ a  holomorphic frame, 
we can decompose as $\kappa=\kappa^{2,0}+\kappa^{1,1}$ according to the types, and $K^{2,0}$
and $K^{1,1}$ are  represented by (cf. \eqref{curvconn})
\begin{equationth}\label{curvform}
\kappa^{2,0}=\de\t+\t\wedge\t\qquad\text{and}\qquad
\kappa^{1,1}=\dbar\t.
\end{equationth}%
Thus $K^{1,1}$, being locally $\dbar$-exact, is a $\dbar$-closed $(1,1)$-form with coefficients in 
$\op{Hom}(E,E)$.

\paragraph{Atiyah forms\,:}
In the above situation, for a $(1,0)$-connection $\na$,
we have the $\dbar$-closed 
$(p,p)$-form $\s_p(K^{1,1})$ on $M$, which is locally  given by  $\s_p(\kappa^{1,1})$.
The {\em $p$-th Atiyah form} of $\na$ is defined by (cf. \cite{ABST})
\[
a^p(\na)=\Bigl(\frac{\sqrt{-1}}{2\pi}\Bigr)^p\s_p(K^{1,1}).
\]

 If $\na$ is a $(1,0)$-connection, the $p$-th Chern form $c^p(\na)$ 
 is  a  $2p$-form having components of types $(2p,0),\ldots, (p,p)$. The Atiyah form $a^p(\na)$ is  the $(p,p)$-component of $c^p(\na)$. In particular, $a^n(\na)=c^n(\na)$. 
 
 If $\varphi$ is an invariant  polynomial homogeneous of degree $k$ and if $\na$ is a 
$(1,0)$-connection, $\varphi(\na)$ is  a closed $2k$-form having components of types $(2k,0),\dots, (k,k)$. 
We denote the $(k,k)$-component  of $\varphi(\na)$ by $\varphi^{{\rm A}}(\na)$ and call it
the Atiyah form of $\na$ \wrt\ $\varphi$.
If $\varphi=P(\s_{1},\s_{2},\dots)$, then $\varphi^{{\rm A}}(\na)=P(a^{1}(\na),a^{2}(\na),\dots)$,
which is a $\dbar$-closed $(k,k)$-form on $M$.

If we have two  $(1,0)$-connections $\na$ and $\na'$ for $E$,
we have the difference form $\varphi^{{\rm A}}(\na,\na')$.
It is the $(k,k-1)$-component of $\varphi(\na,\na')$ and satisfies
\[
\dbar \varphi^{{\rm A}}(\na,\na')=\varphi^{{\rm A}}(\na')-\varphi^{{\rm A}}(\na).
\]
Thus,  if $\na$ is a $(1,0)$-connection for $E$, the class of $\varphi^{{\rm A}}(\na)$ in $H^{k,k}_{\dbar}(M)$ does not depend on the choice of $\na$. It is denoted by $\varphi^{{\rm A}}(E)$ and is called the Atiyah class of $E$ \wrt\ $\varphi$. In particular, the class of $a^p(\na)$ in $H^{p,p}_{\dbar}(M)$ is 
the {\em $p$-th Atiyah class} $a^p(E)$ of $E$.

\paragraph{Hermitian connections\,:}

A {\em Hermitian vector bundle} $(E,h)$ is a $C^{\infty}$ complex vector bundle $E$ together with a Hermitian metric $h$ on $E$.
 A connection $\na$ for $E$
is an {\em $h$-connection}  if it is compatible with $h$\,:
\[
d\hspace{.2mm}h(s,t)=h(\na s,t)+h(s,\na t)\qquad\text{for}\ \ s, t\in \cE^0(M,E).
\]
A connection $\na$ for $E$ is  {\em Hermitian},  if it is an $h$-connection for some 
$h$. 
Note that every Hermitian vector bundle $(E,h)$ admits an $h$-connection. Thus every complex vector bundle admits a Hermitian connection. 

\paragraph{Metric connections\,:}

Let $M$ be a complex \mfd\ and $E\ra M$ a holomorphic \vb. Recall that, for any Hermitian metric $h$ on $E$, 
there exists a unique
$h$-connection of type $(1,0)$. We call such a connection the {\em  $h$-metric connection}.
Let $\na$ be the $h$-metric connection. Let $\bm{e}=(e_1,\dots,e_l)$ be a (local) holomorphic frame and $\t$ the connection matrix of $\na$ \wrt\ $\bm{e}$. We also
set $H=(h_{ij})$, $h_{ij}=h(e_i,e_j)$. Then the connection matrix \wrt\  $\bm{e}$ is given by
\begin{equationth}\label{metric}
\t={}^t\hspace{-.7mm}H^{-1}\cdot\de {}^t\hspace{-.7mm} H.
\end{equationth}

We call a Hermitian connection of type $(1,0)$ a {\em metric connection}. 
From \eqref{metric} we see that,
if $\na$ is a metric connection for $E$, its curvature matrix $\kappa$ \wrt\ a holomorphic frame is of type $(1,1)$\,:
\begin{equationth}\label{curvmetric}
\kappa=\kappa^{1,1}=\dbar\t.
\end{equationth}
Thus we have\,:
\begin{proposition}\label{a=c} Let $E$ be a holomorphic  vector bundle. If $\na$ is a metric connection for $E$, the Chern forms and the Atiyah forms are the same\,{\rm :}
\[
\varphi(\na)=\varphi^{{\rm A}}(\na).
\]
Thus they are simultaneously $d$  and $\dbar$-closed.
\end{proposition}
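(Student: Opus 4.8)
\textbf{Proof proposal for Proposition \ref{a=c}.}

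The plan is to exploit the local formula \eqref{metric} for the connection matrix of the $h$-metric connection. The whole statement is essentially a consequence of the bidegree bookkeeping, so I would organize it around the single identity \eqref{curvmetric}. First I would fix a point and a local holomorphic frame $\bm{e}=(e_1,\dots,e_l)$ for $E$, and write the connection matrix $\t$ of $\na$ with respect to $\bm{e}$. Since $\na$ is of type $(1,0)$ and $\bm{e}$ is holomorphic, $\t$ has entries of pure type $(1,0)$; then by the general curvature-connection relation \eqref{curvconn}, $\kappa = d\t + \t\wedge\t = (\de\t + \t\wedge\t) + \dbar\t$, where the first group is of type $(2,0)$ and $\dbar\t$ is of type $(1,1)$. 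This recovers the decomposition $\kappa = \kappa^{2,0} + \kappa^{1,1}$ of \eqref{curvform} valid for any $(1,0)$-connection.

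The key step is then to show $\kappa^{2,0}=0$ for a metric connection, i.e. $\de\t + \t\wedge\t = 0$. For this I would substitute the explicit expression $\t = {}^t\hspace{-.7mm}H^{-1}\cdot\de\,{}^t\hspace{-.7mm}H$ from \eqref{metric}, where $H=(h_{ij})$ with $h_{ij}=h(e_i,e_j)$, and compute $\de\t$ using $\de({}^tH^{-1}) = -{}^tH^{-1}\cdot\de({}^tH)\cdot{}^tH^{-1}$ together with $\de\de = 0$. A short manipulation shows $\de\t = -\t\wedge\t$, so that the $(2,0)$-part of the curvature vanishes and $\kappa = \kappa^{1,1} = \dbar\t$, which is exactly \eqref{curvmetric}. (Alternatively, one can argue invariantly: a $(1,0)$-connection compatible with $h$ is the Chern connection, whose curvature is well known to be of type $(1,1)$; but the computational route above is self-contained given what the excerpt has set up.)

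Once $\kappa$ is of pure type $(1,1)$, the conclusion follows formally. For an invariant polynomial $\varphi$ homogeneous of degree $k$, the form $\varphi(\na)$ is locally $(\tfrac{\sqrt{-1}}{2\pi})^k\varphi(\kappa)$; since $\kappa$ has entries of pure type $(1,1)$, $\varphi(\kappa)$ is of pure type $(k,k)$, and hence $\varphi(\na)$ coincides with its own $(k,k)$-component, which by definition is $\varphi^{{\rm A}}(\na)$. This gives $\varphi(\na) = \varphi^{{\rm A}}(\na)$. Finally, the Chern-Weil theory recalled above guarantees $d\hspace{.3mm}\varphi(\na) = 0$, and the Atiyah-form discussion guarantees $\dbar\hspace{.3mm}\varphi^{{\rm A}}(\na) = 0$; since the two forms are equal, this single form is simultaneously $d$- and $\dbar$-closed. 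The only mild obstacle is the bookkeeping in verifying $\de\t+\t\wedge\t=0$ from \eqref{metric}, which is a routine matrix-calculus computation, so I expect no real difficulty.
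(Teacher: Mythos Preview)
Your proposal is correct and follows exactly the route the paper takes: the paper simply records that from \eqref{metric} one has $\kappa=\kappa^{1,1}=\dbar\t$ (equation \eqref{curvmetric}) and then states Proposition \ref{a=c} as an immediate consequence, without spelling out the verification of $\de\t+\t\wedge\t=0$ or the bidegree argument for $\varphi(\kappa)$. You have merely filled in those routine details.
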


\begin{remark}\label{remm} {\bf 1.} For two metric connections $\na$ and $\na'$, we also have 
\[
\varphi(\na,\na')=\varphi^{{\rm A}}(\na,\na').
\]

\noindent
{\bf 2.} Let $(E,h)$ be a Hermitian line bundle and $s$ a non-vanishing holomorphic section of $E$.  We may think of $s$ as a frame and set $N(s)=H=h(s,s)$, the square of the norm of $s$. 
Then, by (\ref {metric}) and (\ref{curvmetric}), 
the first Chern form of the $h$-metric connection $\na$ is given by
\[
c^1(\na)=\frac {\sqrt{-1}} {2\pi}\kappa=\frac {\sqrt{-1}} {2\pi}\dbard\log N(s),
\]
which is also equal to $a^1(\na)$.
\end{remark}

\subsection{Bott-Chern classes}

Let $E\ra M$ be a \h\ \vb\ on a complex \mfd. For an invariant polynomial $\varphi$ and a Hermitian metric $h$ on $E$ we set $\varphi(E,h)=\varphi(\na)$, where $\na$ is the $h$-metric connection. 
If we have two metrics $h$ and $h'$ on $E$, we have the Bott-Chern  form $\varphi(E,h,h')$
which relates $\varphi(E,h)$ and $\varphi(E,h')$. It was first introduced in \cite{BC} and then has been studied by a number of authors via different approaches (cf. \cite{BGS}, \cite{BKK}, \cite{BL}, \cite{GS1} and references therein).
We may summarize what we need in our situation as\,:

\begin{theorem}\label{thBCd} 
Let $\varphi$ be an invariant polynomial homogeneous of degree $k$. There is a unique way of assigning to $(E,h,h')$ a $(k-1,k-1)$-form 
$\varphi(E,h,h')$ so that
\begin{enumerate}
\item[\rm (1)] $\bp\de\varphi(E,h,h')=\varphi(E,h')-\varphi(E,h)$,
\item[\rm (2)] $f^{*}\varphi(E,h,h')=\varphi(f^{*}(E,h,h'))$ for every \h\ map $f:M'\ra M$,
\item[\rm (3)] $\varphi(E,h,h')=0$, if $h=h'$.
\end{enumerate}
\end{theorem}

In particular from (1) above we see that the class of $\varphi(E,h)$ in $H^{k,k}_{\rm BC}(M)$ does not depend on the
choice of the metric $h$.

\begin{definition}
The {\em Bott-Chern class} $\varphi_{\rm BC}(E)$ of $E$ \wrt\ $\varphi$ is the class  in  $H^{k,k}_{\rm BC}(M)$  of $\varphi(E,h)$ for some Hermitian metric $h$ on $E$.
\end{definition}

In the sequel we denote $\varphi(E,h)$ and $\varphi(E,h,h')$ simply by $\varphi(h)$ and $\varphi(h,h')$, if the bundle under consideration is understood.

\begin{remark}\label{BC=A}
%
{\bf 1.}
The Bott-Chern Chern class and the Bott-Chern Atiyah class are the same\,: 
$c^p_{\rm BC}(E)=a^p_{\rm BC}(E)$ in $H^{p,p}_{\rm BC}(M)$. It goes to the Atiyah class $a^p(E)$ by the first morphism in (\ref{nathomos}) and to the Chern class
$c^p(E)$ by the second.
\smallskip

\noindent
{\bf 2.} Suppose $E$ is a line bundle with Hermitian metrics $h$ and $h'$. For a non-vanishing section $s$,
we set $N(s)=h(s,s)$ and $N'(s)=h'(s,s)$. Then, for $\varphi=c^{1}$, we have
\[
c^{1}(h,h')=\frac{\sqrt{-1}}{2\pi}\log \frac{N'(s)}{N(s)}.
\]
\smallskip

\noindent
{\bf 3.} In general, if $h_{0}$ and $h_{1}$ are Hermitian metrics on $E$ defined on open sets $U_{0}$ and $U_{1}$
in $M$, \r, the form $\varphi(h_{0},h_{1})$ as above is defined on $U_{01}=U_{0}\cap U_{1}$.
\end{remark}

\subsection{Characteristic classes in  \v Cech-Bott-Chern cohomology}

Let $E$ be a \h\ \vb\ on a complex manifold $M$ and $\U=\{U_0,U_1\}$ an open covering
of $M$. For $i=0,1$, let $h_i$ be a  metric  for $E$ on $U_i$. If $\varphi$ is an invariant polynomial
homogeneous of degree $k$, the cochain
\begin{equation}\label{BCcocycle}
\varphi(h_*)=(\varphi(h_0),\varphi(h_1),\varphi(h_0,h_1))
\end{equation}
in
\[
\cE^{k,k}_{\rm BC}(\U)=\cE^{k,k}(U_0)\oplus \cE^{k,k}(U_1)\oplus
\cE^{k-1,k-1}(U_{01}).
\]
is  a cocycle. 
Thus  we have  a class
$[\varphi(h_*)]$ in $ H_{\rm BC}^{k,k}(\U) $.

\begin{proposition}\label{propindep}
The class does not depend on the choice of the metrics. Moreover it corresponds to $\varphi_{BC}(E)\in
H^{k,k}_{\rm BC}(M)$ via the \iso\ \eqref{isobc}.
\end{proposition}

\begin{proof} We recall (cf. \cite[Proposition 4.23]{BKK}) that, for three metrics $(h,h',h'')$, there exist
a $(k-1,k-2)$-form $\varphi^{(1)}(h,h',h'')$ and a $(k-2,k-1)$-form $\varphi^{(2)}(h,h',h'')$ \st
\[
\varphi(h',h'')-\varphi(h,h'')+\varphi(h,h')+\bp\varphi^{(1)}(h,h',h'')+
\de\varphi^{(2)}(h,h',h'')=0.
\]

Let $h_{0}'$ be another metric on $E$ on $U_{0}$ and set 
\[
\varphi(h_{*}')=(\varphi(h_0'),\varphi(h_1),\varphi(h_0',h_1)).
\]
Then we have
\[
\begin{aligned}
\varphi(h_{*}')-\varphi(h_{*})&=(\varphi(h_0')-\varphi(h_0),0,\varphi(h_0',h_1)-\varphi(h_0,h_1))\\
&=(\dbard\varphi(h_{0},h_{0}'),0,-\varphi(h_{0},h_{0}')-\bp\varphi^{(1)}(h_{0},h_{0}',h_{1})-
\de\varphi^{(2)}(h_{0},h_{0}',h_{1}),
\end{aligned}
\]
which is in $B^{k,k}_{\rm BC}(\U)$ (cf. (\ref{CBCcocycob})) so that $[\varphi(h_{*}')]=[\varphi(h_{*})]$.

Likewise  Let $h_{1}'$ be another metric on $E$ on $U_{1}$ and set 
\[
\varphi(h_{*}')=(\varphi(h_0),\varphi(h'_1),\varphi(h_0,h'_1)).
\]
Then we have
\[
\begin{aligned}
\varphi(h_{*}')-\varphi(h_{*})&=(0,\varphi(h_1')-\varphi(h_1),\varphi(h_0,h'_1)-\varphi(h_0,h_1))\\
&=(0,\dbard\varphi(h_{1},h_{1}'),\varphi(h_{1},h_{1}')+\bp\varphi^{(1)}(h_{0},h_{1},h'_{1})+
\de\varphi^{(2)}(h_{0},h_{1},h'_{1}),
\end{aligned}
\]
which is in $B^{k,k}_{\rm BC}(\U)$ (in fact in $B^{k,k}_{\rm BC}(\U,U_{0})$).

The last part can be seen by comparing with the class defined by a global metric.
\end{proof}

\lsection{A vanishing theorem}\label{ABvanish}

\subsection{Actions of  distributions}\label{locdist}

Let $M$ be a complex \mfd\ of dimension $n$ and $F$  a non-singular distribution of dimension $p$, i.e., a  subbundle of $TM$ of  rank $p$.

\begin{definition} A (holomorphic) {\it action} of $F$ on a 
holomorphic \vb\ $E$ over $M$ is a $\C$-bilinear map
\[
\a: \cE^0(M,F)\times  \cE^0(M,E)\lra  \cE^0(M,E)
\]
satisfying the following conditions, for $f\in \cE^0(M)$, $u\in \cE^0(M,F)$,  $s\in \cE^0(M,E)$\,:
\begin{enumerate}
\item[(1)] $\a(fu,s)=f\a(u,s)$,
\item[(2)] $\a(u,fs)=u(f)s+f\a(u,s)$ and
\item[(3)] $\a(u,s)$ is holomorphic whenever $u$ and $s$ are.
\end{enumerate}
\end{definition}

A \vb\ $E$ with an action of $F$ is called an {\it $F$-bundle}.

\begin{definition}\label{defFc} Let $E$ be an $F$-bundle with action $\a$.
An {\em $F$-connection}
for $E$ 
is a $(1,0)$-connection $\na$ with
\[
\na_u(s)=\a(u,s),\qquad\text{for}\ \ s\in \cE^0(M,E),\ u\in \cE^0(M,F).
\]
\end{definition}

From the fact that an action is a local operation, we see that an $F$-bundle always admits an $F$-connection.

We note that the above material can be equivalently treated in 
terms of partial holomorphic connections instead of actions (cf.  \cite{ABST}), the  condition (4) in Remark 
\ref{remv}.\,1 below corresponding to the fact that the partial connection is flat.
We have the following Bott type vanishing theorem for $F$-connections, which is proved in \cite[Theorem 6.10]{ABST} in the context of partial connections.

\begin{theorem}
Let $M$ and $F$ be as above.
Let $E$ be an $F$-bundle and $\na$  an $F$-connection for $E$.
For  an 
invariant polynomial $\varphi$ homogeneous of degree $k>n-p$, we have\,{\rm :}
\[
\varphi^{\rm A}(\na)= 0.
\]\label{ABvanishing}
\end{theorem}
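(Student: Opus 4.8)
\textbf{Proof plan for Theorem \ref{ABvanishing}.}
The plan is to reduce the statement to a local computation with the curvature matrix of an $F$-connection, exactly along the lines of the classical Bott vanishing theorem, but keeping track of types so that it is the Atiyah form $\varphi^{\rm A}(\na)$ — i.e.\ the $(k,k)$-component of $\varphi(\na)$ — that vanishes. First I would recall from \eqref{curvform} that, for a $(1,0)$-connection $\na$ with local connection matrix $\t$ (of type $(1,0)$) relative to a holomorphic frame, the curvature splits as $\kappa=\kappa^{2,0}+\kappa^{1,1}$ with $\kappa^{1,1}=\dbar\t$, and that $\varphi^{\rm A}(\na)$ is represented locally by $(\tfrac{\sqrt{-1}}{2\pi})^k\varphi(\kappa^{1,1})$. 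So it suffices to show that the $(k,k)$-form $\varphi(\kappa^{1,1})=\varphi(\dbar\t)$ vanishes identically when $\na$ is an $F$-connection and $k>n-p$.

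The key point is the algebraic constraint that the $F$-connection condition $\na_u(s)=\a(u,s)$ imposes on $\t$. Working locally, choose holomorphic coordinates adapted to $F$ so that $F$ is spanned near a point by $\partial/\partial z_1,\dots,\partial/\partial z_p$ (possible since $F$ is a non-singular, i.e.\ locally trivial, subbundle of $TM$; integrability is not needed for this weaker statement, only the flatness-type condition if one wants it — but see the obstacle paragraph). The condition $\na_{\partial/\partial z_j}(s)=\a(\partial/\partial z_j,s)$ for $j=1,\dots,p$, together with the fact that $\a(\partial/\partial z_j,s)$ is holomorphic when $s$ is, forces the entries of $\t$ to have \emph{no} $d z_1,\dots,d z_p$ components whose coefficients fail to be holomorphic; more precisely it forces the $(0,1)$-type object $\dbar\t$ to involve only $d\bar z_{p+1},\dots,d\bar z_n$. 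Hence each entry of $\kappa^{1,1}=\dbar\t$ lies in the ideal generated by $d\bar z_{p+1},\dots,d\bar z_n$, a module generated by $n-p$ one-forms of type $(0,1)$. Then $\varphi(\kappa^{1,1})$, being a polynomial of degree $k$ in the entries of $\kappa^{1,1}$, has its antiholomorphic part lying in the $k$-th exterior power of a space of dimension $n-p$; since $k>n-p$ this power vanishes, so $\varphi(\kappa^{1,1})=0$. This is precisely the type-refined version of Bott's argument, and it yields $\varphi^{\rm A}(\na)\equiv 0$ globally because the vanishing is checked on a covering by such adapted charts and $\varphi^{\rm A}(\na)$ is globally defined.

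The main obstacle is making the local normalization of $\t$ rigorous: one must produce, near each point, a holomorphic frame of $E$ together with adapted coordinates in which the $F$-connection matrix visibly has the required shape. This is where the hypotheses enter most delicately — one typically invokes the existence of a \emph{partial} holomorphic connection along $F$ (the reformulation mentioned in the text, with condition (4) of Remark \ref{remv}.\,1 corresponding to its flatness) and uses a local holomorphic trivialization of $E$ along the leaves to kill the $dz_1,\dots,dz_p$ part of $\t$ up to holomorphic terms, so that $\dbar\t$ has no $d\bar z_j$ component for $j\le p$. Rather than reprove this, I would cite \cite[Theorem 6.10]{ABST}, where the statement is established in the language of partial connections; the only thing to add here is the bookkeeping identifying $\varphi^{\rm A}(\na)$ with the $(k,k)$-part computed from $\kappa^{1,1}=\dbar\t$ via \eqref{curvform}, after which the degree count $k>n-p$ closes the argument.
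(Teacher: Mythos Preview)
The paper itself gives no proof of this theorem: it simply states that the result ``is proved in \cite[Theorem 6.10]{ABST} in the context of partial connections.'' So your final move of deferring to that reference is exactly what the paper does, and on that level your proposal matches.

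That said, your sketch of the underlying argument contains two inaccuracies worth flagging. First, and most seriously, you claim one can choose holomorphic coordinates near a point so that $F$ is spanned by $\partial/\partial z_1,\dots,\partial/\partial z_p$, asserting that ``integrability is not needed.'' This is false: the existence of such adapted holomorphic coordinates is precisely the conclusion of the holomorphic Frobenius theorem and requires $F$ to be involutive. For a general non-singular distribution one must instead work with a local holomorphic frame $v_1,\dots,v_p$ of $F$ (or, dually, with $(1,0)$-forms $\omega^1,\dots,\omega^{n-p}$ spanning the annihilator of $F$ in $(T^{1,0}M)^*$). The $F$-connection condition then says $\theta(v_j)=\alpha(v_j,\cdot)$ is holomorphic for holomorphic $v_j$, and a short computation gives $\iota_{v_j}\kappa^{1,1}=\iota_{v_j}\dbar\theta=0$, so each entry of $\kappa^{1,1}$ lies in the ideal generated by $\omega^1,\dots,\omega^{n-p}$.

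Second, even granting your (illegitimate) adapted coordinates, your type bookkeeping is reversed: from $\theta_j:=\theta(\partial/\partial z_j)$ holomorphic for $j\le p$ one gets $\dbar\theta=\sum_{j>p}\dbar\theta_j\wedge dz_j$, which lies in the ideal generated by the $(1,0)$-forms $dz_{p+1},\dots,dz_n$, not by $d\bar z_{p+1},\dots,d\bar z_n$. It is therefore the \emph{holomorphic} $(k,0)$-part of $\varphi(\kappa^{1,1})$ that is forced to be a $k$-fold wedge from an $(n-p)$-dimensional space, whence the vanishing for $k>n-p$. With these two corrections your outline is essentially the argument carried out in \cite{ABST}.
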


\begin{corollary}\label{vanherm} If, in addition, $\na$ is Hermitian,
\[
\varphi(\na)= 0.
\]
\end{corollary}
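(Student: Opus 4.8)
The plan is to combine Theorem \ref{ABvanishing} with Proposition \ref{a=c}. By Theorem \ref{ABvanishing} we already know that the Atiyah form $\varphi^{\rm A}(\na)$ vanishes identically, under the hypothesis $k > n-p$. So the only thing left is to bridge the gap between the Atiyah form $\varphi^{\rm A}(\na)$, which is the $(k,k)$-component of the Chern form $\varphi(\na)$, and the full Chern form $\varphi(\na)$ itself. In general $\varphi(\na)$ for a $(1,0)$-connection has components of types $(2k,0), \ldots, (k,k)$, so these two need not agree; the extra hypothesis that $\na$ is Hermitian is precisely what forces them to coincide.

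First I would invoke the assumption that $\na$ is Hermitian: by definition it is an $h$-connection for some Hermitian metric $h$ on $E$. Since $\na$ is also of type $(1,0)$ — an $F$-connection is a $(1,0)$-connection by Definition \ref{defFc} — it is simultaneously an $h$-connection and a $(1,0)$-connection, hence it is the $h$-metric connection in the sense of the paragraph on metric connections (recall that for a fixed Hermitian metric $h$ the $h$-connection of type $(1,0)$ is unique). In other words, a Hermitian $F$-connection is a metric connection in the terminology of the excerpt.

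Now Proposition \ref{a=c} applies directly: for a metric connection one has $\varphi(\na) = \varphi^{\rm A}(\na)$. Combining this with Theorem \ref{ABvanishing}, which gives $\varphi^{\rm A}(\na) \equiv 0$ when $k > n-p$, we conclude $\varphi(\na) \equiv 0$, which is exactly the assertion. I do not expect any genuine obstacle here; the only point requiring a moment's care is the observation that "Hermitian" together with "$F$-connection" (hence "$(1,0)$") forces the connection to be the metric connection, so that Proposition \ref{a=c} is legitimately available — but this is immediate from the uniqueness of the $h$-metric connection. The whole argument is essentially a one-line deduction from the two previously established results, and the proof should be correspondingly short.
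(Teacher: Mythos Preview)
Your proposal is correct and matches the paper's intended argument: the corollary is stated without explicit proof, but the setup makes clear it follows by combining Theorem~\ref{ABvanishing} with Proposition~\ref{a=c}, exactly as you do. One minor simplification: you need not invoke uniqueness of the $h$-metric connection, since the paper \emph{defines} a metric connection to be any Hermitian connection of type $(1,0)$, so a Hermitian $F$-connection is a metric connection by definition and Proposition~\ref{a=c} applies immediately.
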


\begin{remark}\label{remv} {\bf 1.} For two connections $\na$ and $\na'$ and an invariant polynomial $\varphi$ as in Theorem  \ref{ABvanishing},
we have the vanishing $\varphi^{\rm A}(\na,\na')= 0$. Thus if, in addition, $\na$ and $\na'$ are
Hermitian, $\varphi(\na,\na')= 0$ (cf. Remark \ref{remm}.\,1). However if $\na$ and $\na'$ are
Hermitian \wrt\ different metrics $h$ and $h'$, the form
$\varphi(h,h')$ does not vanish in general, as the following example, which was communicated to us by  the referee, shows.

Let $E=M\times\C$ be the product bundle and $F=TM$ so that $p=n$. Then there is a natural action of $F$ on $E$
(the action of vector fields on \fcn s). The exterior derivative $d$ is an $F$-connection for $E$.
 It is also the metric
connection \wrt\ any constant metric on $E$, i.e., a metric
 $h$ 
given by $h(e,e)=a$ (a positive real number) with
$e$ the frame of $E$ defined by $x\mapsto (x,1)$. Note that for $\varphi=c^{1}$, $\deg \varphi=1>n-p=0$. However,
for the two metrics $h$ and $h'$ 
given by $h(e,e)=1$ and $h'(e,e)=a\ne1$, we have (cf. Remark~\ref{BC=A}.\,3)
\[
c^{1}(h,h')=\frac{\sqrt{-1}}{2\pi}\log a\ne 0.
\]
\smallskip

\noindent
{\bf 2.}
In Theorem \ref{ABvanishing}, if $F$ is involutive and if the action satisfies
\begin{enumerate}
\item[(4)] $\a([u,v],s)=\a(u,\a(v,s))-\a(v,\a(u,s))$,
\end{enumerate}
we have $\varphi(\na)= 0$ for $\varphi$ with $k> n-p $. This is usually referred to as the Bott vanishing theorem.
\smallskip

\noindent
{\bf 3.} Let $h$ be a Hermitian metric on $E$. Under the condition of Corollary \ref{vanherm}, we have 
\[
u(h(s,t))=h(\na_u(s),t),
\]
for $u$ in $A^0(M,F)$ and a holomorphic section $t$ of $E$.
\smallskip

\noindent
{\bf 4.} Some special cases of this Bott type vanishing theorem  are
proved in \cite{Carrell}.
\end{remark}

\subsection{Action on the normal bundle of an invariant sub\mfd}\label{sscs} 

Let $M$ be a complex manifold of dimension $n$ and $V$ a complex sub\mfd\ of dimension $d$ of $M$. 
Let $N_V$ be the normal bundle of $V$ in $M$ so that we have the exact sequence
\begin{equationth}\label{exactnormal}
0\lra TV\lra TM|_V\overset\pi\lra N_V\lra 0.
\end{equationth}
Let $F$ be a distribution of dimension $p$ on $M$. 
We say that $F$ leaves $V$ invariant if $F|_V\subset TV$. In this case we set $F_{V}=F|_{V}$, which is a 
a distribution of dimension $p$ on $V$.
The following is proved in \cite{Leh3} (see also \cite{LS}) for the case of foliations. 
In fact the involutivity of $F$ is not necessary and a proof is given in \cite{ABST} in terms of partial connections
and in \cite{Sw3} in terms of actions. 

\begin{theorem}\label{holo-normal}
In the above situation,
there is a natural holomorphic action  of $F_V$ on 
$N_V$.
\end{theorem}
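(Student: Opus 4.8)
The plan is to construct the action $\a_V$ of $F_V$ on $N_V$ directly from the data on $M$ by "differentiating along $F$ and projecting to the normal direction," and then to check that the three axioms of a holomorphic action hold. First I would set up the local picture: near a point of $V$, choose holomorphic coordinates $(z_1,\dots,z_d,w_1,\dots,w_{n-d})$ so that $V=\{w_1=\dots=w_{n-d}=0\}$, so that the classes $[\partial/\partial w_\alpha]$ restricted to $V$ form a holomorphic frame of $N_V$, and so that a section of $N_V$ is represented by $\sum_\alpha a_\alpha(z)\,\partial/\partial w_\alpha \bmod TV$. Since $F$ leaves $V$ invariant, a holomorphic section $u$ of $F_V$ extends to a holomorphic section $\tilde u$ of $F$ near $V$ whose restriction to $V$ lies in $TV$; writing $\tilde u=\sum_i b_i(z,w)\partial/\partial z_i+\sum_\alpha c_\alpha(z,w)\partial/\partial w_\alpha$, invariance forces $c_\alpha(z,0)=0$ for all $\alpha$.

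The definition will be: for $u\in\cE^0(V,F_V)$ and $s\in\cE^0(V,N_V)$, lift $u$ to $\tilde u\in\cE^0(U,F)$ on a neighborhood $U$ in $M$ and lift $s$ to $\tilde s\in\cE^0(U,TM)$, then set
\[
\a_V(u,s)=\pi\bigl([\tilde u,\tilde s]|_V\bigr)\in\cE^0(V,N_V),
\]
where $[\ ,\ ]$ is the Lie bracket of vector fields and $\pi$ is the projection in \eqref{exactnormal}. The key steps are then: (i) well-definedness, i.e. independence of the choice of lift $\tilde s$ — if $\tilde s'-\tilde s$ vanishes along $V$ as a section of $TM|_V$, one shows using $c_\alpha(z,0)=0$ that $[\tilde u,\tilde s'-\tilde s]|_V$ again lies in $TV$, so projects to $0$; (ii) independence of the choice of lift $\tilde u$ — if $\tilde u'-\tilde u$ vanishes along $V$, the same computation applies; (iii) the axioms (1) $\a_V(fu,s)=f\a_V(u,s)$, which follows because $[f\tilde u,\tilde s]=f[\tilde u,\tilde s]-(\tilde s f)\tilde u$ and the extra term vanishes along $V$ since $\tilde u|_V\in TV$; (2) the Leibniz rule $\a_V(u,fs)=u(f)s+f\a_V(u,s)$, which comes from $[\tilde u,f\tilde s]=f[\tilde u,\tilde s]+(\tilde u f)\tilde s$ together with the fact that $\tilde u f|_V=u(f|_V)$; and (3) holomorphy of $\a_V(u,s)$ when $u,s$ are holomorphic, which is clear since the bracket of holomorphic vector fields is holomorphic and $\pi$ is holomorphic.

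The main obstacle is step (i)–(ii), the well-definedness: this is exactly where the invariance hypothesis $F|_V\subset TV$ enters in an essential way, and it must be verified that the construction descends both through the indeterminacy in extending $s$ off $V$ and through the indeterminacy in extending $u$ off $V$. Concretely, the point is that if $\tilde r$ is a vector field on $U$ with $\tilde r|_V\equiv 0$, then writing $\tilde r=\sum_\alpha w_\alpha\tilde r_\alpha$ (plus higher order) and using $c_\alpha(z,0)=0$, the bracket $[\tilde u,\tilde r]|_V$ is again a section of $TV$; the $w_\alpha$ factors are preserved to first order precisely because $\tilde u$ is tangent to $V$. Once this is established, the axioms are formal consequences of the Leibniz identities for the Lie bracket, and the existence of the action follows. (Alternatively, one may phrase the whole argument in terms of the partial holomorphic connection on $N_V$ induced by the Bott connection, as in \cite{ABST}, but the bracket description above is the most transparent.)
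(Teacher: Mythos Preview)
Your construction is exactly the Camacho-Sad action $\a(u,\nu)=\pi([\tilde u,\tilde v]|_V)$ that the paper records immediately after the theorem statement (with the detailed verification deferred to \cite{ABST} and \cite{Sw3}), and your plan for checking well-definedness and the axioms is the standard one. One small wording fix in step (i): the difference $\tilde s'-\tilde s$ of two lifts of $s$ need only satisfy $(\tilde s'-\tilde s)|_V\in TV$, not $(\tilde s'-\tilde s)|_V=0$, but your local computation handles this case equally well since then both $\tilde u$ and $\tilde s'-\tilde s$ have vanishing $\partial/\partial w_\alpha$-components along $V$.
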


The action is constructed as follows. Let $u$ and $\nu$ be $C^\infty$ sections of $F_V$ and $N_V$, respectively. Take sections $\tilde u$ of $F$ and $\tilde v$ of $TM$ so that $\tilde u|_V=u$ and $\pi(\tilde v|_V)=\nu$, where $|_V$ means the restriction as  sections. Define 
\begin{equationth}\label{csaction}
\a:\cE^0(V,F_V)\times \cE^0(V,N_V)\lra \cE^0(V,N_V)\qquad\text{by}\ \ \a(u,\nu)=\pi([\tilde u,\tilde v]|_V).
\end{equationth}
Then it is a well-defined action, which is referred to as the {\em Camacho-Sad action}.

\begin{corollary} In the above situation, let $\na$ be an $F_V$-connection for $N_V$, which is also Hermitian
\wrt\ some Hermitian metric on $N_V$. Then, for an invariant  polynomial $\varphi$ homogeneous of degree $k> d-p$, we have $\varphi(\na)= 0$.
\end{corollary}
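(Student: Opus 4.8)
The plan is to deduce this corollary directly from Corollary \ref{vanherm} by checking that the data provided — the Camacho-Sad action of $F_V$ on $N_V$ together with a Hermitian $F_V$-connection $\na$ — meets the hypotheses of that corollary with the ambient manifold, distribution and bundle taken to be $V$, $F_V$ and $N_V$ respectively. First I would note that, by Theorem \ref{holo-normal}, $N_V$ is an $F_V$-bundle over the complex manifold $V$, and $V$ has dimension $d$ while $F_V$ is a non-singular distribution of dimension $p$ on $V$ (this is exactly the setup in Section \ref{locdist}, now with $n$ replaced by $d$). By hypothesis $\na$ is an $F_V$-connection for $N_V$ in the sense of Definition \ref{defFc}, i.e. a $(1,0)$-connection satisfying $\na_u(s)=\a(u,s)$ for sections $u$ of $F_V$ and $s$ of $N_V$, where $\a$ is the Camacho-Sad action \eqref{csaction}; and it is Hermitian with respect to some Hermitian metric on $N_V$.

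Second, I would invoke Corollary \ref{vanherm} verbatim: for an invariant polynomial $\varphi$ homogeneous of degree $k$ with $k>\dim V-\dim F_V=d-p$, and for a Hermitian $F_V$-connection $\na$ on the $F_V$-bundle $N_V$, we have $\varphi(\na)\equiv 0$ on $V$. Since the corollary's degree condition $k>d-p$ is precisely the instance of $k>n-p$ in Theorem \ref{ABvanishing} and Corollary \ref{vanherm} with $n=d$, this is immediate. Thus the entire content of the statement is the observation that Theorem \ref{holo-normal} manufactures an $F_V$-bundle structure on $N_V$ to which the vanishing theorem applies.

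The one point that deserves a sentence of justification — and the only place where anything could go wrong — is the compatibility between the involutivity assumption that was implicit in some classical formulations and the degree threshold we use here. In our setting no involutivity of $F$ (hence none of $F_V$) is required: Theorem \ref{holo-normal} asserts that the Camacho-Sad action is a genuine holomorphic action (conditions (1)–(3) of the definition of an action), and Theorem \ref{ABvanishing}, quoted from \cite{ABST}, only uses that $\na$ is an $F$-connection, not that the action satisfies the flatness condition (4) of Remark \ref{remv}.\,1. Consequently the weaker conclusion $\varphi^{\rm A}(\na)\equiv 0$ holds for $k>d-p$ without involutivity, and adding the Hermitian hypothesis upgrades it to $\varphi(\na)\equiv 0$ via Proposition \ref{a=c} exactly as in the proof of Corollary \ref{vanherm}. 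I therefore expect no real obstacle; the proof is a one-line reduction, and the main care is simply in correctly substituting $(d,p)$ for $(n,p)$ when citing the dimension bound.
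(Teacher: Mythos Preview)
Your proposal is correct and is exactly the intended argument: the paper states this corollary without proof, as it is an immediate specialization of Corollary~\ref{vanherm} (via Theorem~\ref{ABvanishing}) to the $F_V$-bundle $N_V$ over the $d$-dimensional manifold $V$ furnished by Theorem~\ref{holo-normal}, with the threshold $n-p$ becoming $d-p$. Your remark that no involutivity is needed is also in line with the paper's own comments preceding Theorem~\ref{holo-normal}.
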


\lsection{Localization and Hermitian residues}\label{secloc}

We briefly recall singular \h\ distributions, for which we refer to \cite{Sw3} for details.
Then we prove a residue theorem for 
 vector bundles admitting a Hermitian connection compatible with an action of the non-singular part of a singular distribution.

\paragraph{Singular \h\ distributions\,:}  Let $M$ be a complex \mfd\ of dimension $n$.  We denote by
${\cal O}_{M}$ and $\varTheta_{M}$ its structure sheaf and the tangent sheaf. For simplicity
we assume that $M$ is connected. {\em A singular distribution}
on $M$ is a coherent subsheaf  $\F$ of $\varTheta_{M}$. We set
\[
S(\F)=\{\,x\in M\mid (\varTheta_{M}/\F)_{x}\ \text{is not}\ {\cal O}_{M,x}\text{-free}\,\}
\]
and call it the singular set of $\F$. Away from $S(\F)$, $\F$ is a locally free $\O$-module and its rank is
called the dimension of $\F$. For instance, if $\F$ is generated by a single vector field $v$, $S(\F)$ is the
set of zeros of $v$.

If $\F$ is involutive, i.e., if $[\F,\F]\subset\F$, then $\F$ is called a {\em singular foliation}.

\paragraph{The residue theorem\,:}
Let $\F$ be a singular distribution of dimension $p$ on $M$  with singular set $S=S(\F)$. There is a rank $p$ subbundle $F_0$ of $TM|_{U_0}$, $U_0=M\moins S$, so that $\F|_{U_0}=\O_{U_0}(F_0)$, the sheaf of \h\ sections of $F_{0}$. Letting  $U_1$ be a
neighborhood of $S$, we consider the
covering $\U=\{U_0,U_1\}$  of $M$.  Let $E$ be a holomorphic \vb\ on $M$ with an action of $F_0$ on $U_0$. 
Let $\na_0$ be an $F_0$-connection for $E$ on $U_0$ and suppose there exists a Hermitian metric $h_{0}$ of $E$ 
on $U_{0}$ \st\ $\na_{0}$ is also an $h_{0}$-connection.
Take a Hermitian metric $h_1$ of $E$ on $U_1$ and 
let $\na_1$ be the $h_1$-metric connection for $E$ on 
$U_1$. Recall that for an 
invariant polynomial $\varphi$ homogeneous of degree $k$, the characteristic
class $\varphi_{\rm BC}(E)$ in $H^{k,k}_{\rm BC}(M)\simeq H^{k,k}_{\rm BC}(\U)$ is represented by the cocycle $\varphi(h_*)$
in $\cE_{\rm BC}^{k,k}(\U)$ given by (\ref{BCcocycle}).  If $k>n-p$, then by 
Corollary~\ref{vanherm}, $\varphi(h_{0})=\varphi(\na_{0})= 0$ and $\varphi(h_*)$ is expressed as
\[
\varphi(h_*)=(0,\varphi(h_1),\varphi(h_{0,}h_1))
\]
so that it is in $\cE_{\rm BC}^{k,k}(\U,U_0)$. Thus it  defines
a class in $H^{k,k}_{\rm BC}(\U,U_0)$, which we denote by $\varphi_{\rm BC}(E;\F)$ and call the localization of $\varphi_{\rm BC}(E)$ by $\F$ at $S$. It is sent to the  class $\varphi_{\rm BC}(E)$ by
the canonical morphism
\[
j^*:  H_{\rm BC}^{k,k}(\U,U_0)\lra H_{\rm BC}^{k,k}(M).
\]

\begin{remark} {\bf 1.} The localization $\varphi_{\rm BC}(E;\F)$ above depends a priori  on the metric $h_{0}$. Let $h_{0}'$ be
another metric of $E$ on $U_{0}$ and $\na_{0}'$ the $h_{0}'$-$F_{0}$-connection for $E$ on $U_{0}$. For the usual proof of the independence of the localization (cf. \cite[Ch.\,III, Lemma 3.1]{Sw1}), we need to have the vanishing of
$\varphi(h_{0},h'_{0})$, however this is not the case in general (cf. Remark~\ref{remv}.\,1).
On the contrary the localization is independent of the choice of the metric $h_{1}$ on $U_{1}$, for a fixed $h_{0}$ (cf. the second half of the proof of Proposition \ref{propindep}).
\smallskip

\noindent
{\bf 2.} In the above situation we have the ``Atiyah localization''  $\varphi^{\rm A}(E;\F)$ in 
$H^{k,k}_{\bar\vt}(\U,U_{0})$ (cf. \cite{ABST}). It is represented by the cocycle 
$(0,\varphi^{\rm A}(\na_{1}),\varphi^{\rm A}(\na_{0},\na_{1}))$, which is equal to $(0,\varphi(\na_{1}),\varphi(\na_{0},\na_{1}))$ in this case (cf. Proposition \ref{a=c} and Remark \ref{remm} 1). 
\end{remark}

Suppose $S$ is compact. Then the  image of $\varphi_{\rm BC}(E;\F)$ by the  morphism (cf. (\ref{ABC}))
\[
\varPsi:H^{k,k}_{\rm BC}(\U,U_0)\lra H^{n-k,n-k}_{\rm A}[S]^*
\]
is denoted by $\op{Res}_{\varphi_{\rm BC}}(\F,E;S)$ and called the residue of $\F$ for $E$ at $S$ with respect to $\varphi$.
If $S$ has a finite number of connected components $(S_\l)$, we take an open \nbd\ $U_\l$ of $S_\l$ in $U_1$ for each $\l$ so that $U_\l\cap U_\mu=\emptyset$ if $\l\ne\mu$. Then 
we have the residue $\op{Res}_{\varphi_{\rm BC}}(\F,E;S_\l)$ in $H^{n-k,n-k}_{\rm A}[S_\l]^*$
 for each $\l$, $H_{\rm A}^{n-p,n-q}[S_{\l}]=\underset{\underset{U_{\l}\supset S_{\l}}\lra}\lim\, H_{\rm A}^{n-p,n-q}(U_\l)$. 

Let $R_\l$ be a $2n$-dimensional manifold with
$C^{\infty}$ boundary in $U_\l$ containing  $S_\l$ in
its interior and set $R_{0\l}=-\de R_\l$.
Then the residue $\op{Res}_{\varphi_{\rm BC}}(\F,E;S_\l)$ is
represented by a functional (cf. (\ref{cup3}) and (\ref{intdrr}))
\begin{equationth}\label{reshom}
\xi\mapsto\int_{R_\l}\varphi(h_1)\wedge \xi+\int_{R_{0\l}}\big(
((\de-\bp)\varphi(h_{0,}h_1))\wedge \xi  -\frac{1}{2}(\de-\bp) (\varphi(h_{0,}h_1)\wedge \xi)\big)
\end{equationth}
for every $\dbarde$-closed $(n-k,n-k)$-form $\xi$ in a \nbd\ of $S_{\l}$.

From the above considerations and Proposition \ref{commutativity}, we have
the following\,:

\begin{theorem}\label{residueth} Let $M$, $\F$ and $S$ be as above.
Suppose $S$ is a compact
set  with a finite number of connected components $(S_\l)_\l$. 
Let $E$ be a holomorphic vector bundle on  $M$. 
Assume that there is an action of $F_0$ on $E$ 
and that there is a Hermitian $F_0$-connection for $E$ on $U_0$.  
Let $\varphi$ be  an invariant
polynomial homogeneous  of degree $k>n-p$.  Then\,{\rm :}
\begin{enumerate}
\item[{\rm (1)}] for each $\l$ we have the residue $\op{Res}_{\varphi_{\rm BC}}(\F,E;S_\l)$
in   $H_{\rm A}^{n-k,n-k}[S_\l]^*$, which is
represented by the functional $(\ref{reshom})$,
\item[{\rm (2)}] if moreover $M$ is compact, 
\[
\sum_\l (i_\l)_*\op{Res}_{\varphi_{\rm BC}}(\F,E;S_\l)=\varPhi(\varphi_{\rm BC}(E))
\qquad\text{in}\ \ H_{\rm A}^{n-k,n-k}(M)^*,
\]
where $i_\l: S_\l \hookrightarrow M$ denotes the
inclusion.
\end{enumerate}
\end{theorem}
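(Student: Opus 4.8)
The plan is to assemble Theorem~\ref{residueth} from the machinery already built, treating parts (1) and (2) separately but in a uniform way. For part (1), the starting point is that under the hypotheses ($\varphi$ homogeneous of degree $k>n-p$, an $F_0$-connection $\na_0$ on $U_0$ that is also $h_0$-Hermitian, and $\na_1$ the $h_1$-metric connection on $U_1$) Corollary~\ref{vanherm} gives $\varphi(\na_0)\equiv 0$. Hence the cocycle $\varphi_{\rm BC}(\na_*)=(\varphi(\na_0),\varphi(\na_1),\varphi_{\rm BC}(\na_0,\na_1))$ of (\ref{BCcocycle}) collapses to $(0,\varphi(\na_1),\varphi_{\rm BC}(\na_0,\na_1))$, which lives in $\cE^{k,k}_{\rm BC}(\U,U_0)$ and therefore defines the localized class $\varphi_{\rm BC}(E;\F)\in H^{k,k}_{\rm BC}(\U,U_0)$. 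Restricting to a connected component $S_\l$ with its own small neighborhood $U_\l$ and using the excision corollary, I would define $\op{Res}_{\varphi_{\rm BC}}(\F,E;S_\l)$ as the image of this class under the duality homomorphism $\varPsi$ of (\ref{ABC}). The explicit functional (\ref{reshom}) is then obtained by unwinding the definitions: feed a $\dbarde$-closed form $\xi$ on a neighborhood of $S_\l$ into the relative cup product (\ref{cup2}) (which is exactly (\ref{cup}) with the $U_0$-component killed), and then integrate the resulting relative Aeppli $(n,n)$-cochain via (\ref{intrela}); combining the two $01$-components $(\s\ssl\xi)^{(1)}_{01}+(\s\ssl\xi)^{(2)}_{01}$ with $\s=\varphi_{\rm BC}(\na_*)$ yields precisely the integrand $((\de-\bp)\varphi_{\rm BC}(\na_0,\na_1))\wedge\xi-\tfrac12(\de-\bp)(\varphi_{\rm BC}(\na_0,\na_1)\wedge\xi)$ over $R_{0\l}=-\de R_\l$, plus $\varphi(\na_1)\wedge\xi$ over $R_\l$. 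This is a bookkeeping computation, not a conceptual one.

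For part (2), assume $M$ compact. The key input is Proposition~\ref{commutativity}: the square relating $j^*$, $\varPsi$, $\varPhi$, and $i_*$ commutes. Since $\varphi_{\rm BC}(E;\F)$ maps to $\varphi_{\rm BC}(E)$ under $j^*$ (this is the defining property of the localization, recorded just before the theorem), applying $\varPhi$ and using commutativity gives $i_*\bigl(\varPsi(\varphi_{\rm BC}(E;\F))\bigr)=\varPhi(\varphi_{\rm BC}(E))$ in $H^{n-k,n-k}_{\rm A}(M)^*$. When $S$ has several components $S_\l$ with disjoint neighborhoods, the relative cohomology $H^{k,k}_{\rm BC}(\U,U_0)$ splits (by excision and additivity over the disjoint pieces) as a direct sum over $\l$, and under this decomposition $\varPsi(\varphi_{\rm BC}(E;\F))=\sum_\l \varPsi(\varphi_{\rm BC}(E;\F)|_{S_\l})=\sum_\l \op{Res}_{\varphi_{\rm BC}}(\F,E;S_\l)$, with each summand pushed forward by the inclusion-induced map $(i_\l)_*\colon H^{n-k,n-k}_{\rm A}[S_\l]^*\to H^{n-k,n-k}_{\rm A}(M)^*$. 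Substituting this into the identity above yields the stated formula $\sum_\l (i_\l)_*\op{Res}_{\varphi_{\rm BC}}(\F,E;S_\l)=\varPhi(\varphi_{\rm BC}(E))$.

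I expect the main obstacle to be the verification underlying (\ref{reshom})—namely, making sure the relative cup product and the relative integration (\ref{intrela}) are correctly composed, with the signs and the factor $\tfrac12$ tracked faithfully from (\ref{cup}), and confirming that the pairing descends to cohomology so that the value is independent of the representative $\xi$ within its $\dbarde$-closed class and of the choice of honeycomb system $\{R_\l\}$. The latter independence is where one must invoke, implicitly, that coboundaries in $\cE^{k,k}_{\rm BC}(\U,U_0)$ pair to zero against $\dbarde$-closed $\xi$ after integration, i.e.\ a relative Stokes-type argument; this is the analogue of the standard fact in \v{C}ech--de~Rham integration theory and I would cite \cite{Le1}, \cite{Sw1} for the pattern. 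A secondary subtlety is that the residue is declared to live in $H^{n-k,n-k}_{\rm A}[S_\l]^*$ rather than depending on the auxiliary neighborhood $U_\l$; this is guaranteed by Corollary~\ref{corind} together with the inductive-limit definition of $H^{n-k,n-k}_{\rm A}[S_\l]$, so no real work is needed there beyond citing it. Everything else—vanishing on $U_0$, the collapse of the cocycle, the commutative square—is quoted directly from the results established earlier in the paper.
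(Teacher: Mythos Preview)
Your proposal is correct and follows essentially the same approach as the paper: the authors state the theorem as an immediate consequence of the preceding localization discussion (vanishing via Corollary~\ref{vanherm}, collapse of the cocycle, definition of the residue through $\varPsi$, and the explicit functional (\ref{reshom}) obtained from (\ref{cup2}) and (\ref{intrela})) together with Proposition~\ref{commutativity} for the commutative square yielding part~(2). Your account is in fact more detailed than what the paper writes out, but the logical structure is identical.
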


A residue as $\op{Res}_{\varphi_{\rm BC}}(\F,E;S_\l)$ is referred to as a {\em Hermitian residue}.

\begin{remark}\label{resnum}
If $k=n$ and  if $M$ is compact and connected,  $H_{\rm A}^{n-k,n-k}(M)^*=H_{\rm A}^{0,0}(M)^*$ may be identified with $\C$, and in this case, 
$ (i_\l)_*{\rm Res}_{\varphi_{\rm BC}}(\F,E;S_\l)$ is a complex number given by
\[
\int_{R_\l}\varphi(h_1)+ \frac{1}{2}\int_{R_{0\l}}
(\de-\bp)\varphi(h_{0,}h_1),
\]
and $\varPhi(\varphi_{\rm BC}(E))$ may be expressed as $\int_M \varphi_{\rm BC}(E)$.
\end{remark}

\lsection{An example}\label{secex}

For $\l \in \C^{*}$ with $|\l|< 1$, we consider the
 Hopf surface $V = (\C^2 \ssm \{0 \})/\sim$, where
$(x,y) \sim  (\l x , \l y)$.
There is a
fibration $\rho: V \ra  \P^1 $ by elliptic  curves. Let $L$ denote the pull-back by $\rho$ of the hyperplane
bundle on $\P^{1}$. Then, as $H^{2}_{d}(V)=0$ and $H^{1,1}_{\bp}(V)=0$,   both the Chern class $c^{1}(L)$  and the Atiyah class $a^{1}(L)$ 
vanish. On the other hand, $H^{1,1}_{\rm BC}(V)\simeq\C$ (cf. \cite{ADT}) and the Bott-Chern class $c^{1}_{\rm BC}(L)$ is a generator. We show that it is
localized at one of the fibers $C$ of $\rho$. For this we realize $V$ as an invariant sub\mfd\ of a singular foliation on an ambient
\mfd\ whose singular set on $V$ is $C$. Then the Camacho-Sad action of the foliation on $L$  away from $C$ gives the
localization (cf. Subsection \ref{sscs}).

We consider the
 Hopf manifold $M = (\C^3 \ssm \{0 \})/\sim$, where $(x,y,z) \sim  (\l x , \l y, \l z)$.
Let $\F$ be the two-dimensional foliation on $M$ induced by the vector fields
\[
v_1=  y\frac{\de}{\de y}+z\frac{\de}{\de z}\quad\text{and}\quad v_2=\frac{\de}{\de x},
\]
 which   leaves invariant the Hopf
surface $V=\{z=0\}/\sim\ \  \subset M $. The singular set $S(\F)$ of $\F$ is given by
\[
S(\F)=C=\{y=z=0\}/\sim\ \  \subset V.
\]
There is a
fibration $\tilde\rho: M \ra  \P^2 $ by elliptic  curves. It restricts to the fibration $\rho:V\ra\P^{1}$, of which  $C$ is a fiber. 
Recall  the exact sequence (\ref{exactnormal})\,:
\[
0\lra TV\lra TM|_{V}\overset\pi\lra N_{V}\lra 0.
\]
If we denote by $\tilde L$ the pull-back of the hyperplane bundle on $\P^{2}$ by $\tilde\rho$, we have $L=\tilde L|_{V}$.
The normal bundle of $V$ is given by $N_V=L$, since $TM=\tilde L\oplus
\tilde L \oplus  \tilde L$ and   $TV=L\oplus L$. 

We try to localize $c^{1}_{\rm BC}(N_V)$ on  $C$.  
Denoting by $[x,y]$ the image of $(x,y)$ by the canonical
surjection $\C^{2}\ssm\{0\}\ra V$,
we set $U_{0}=\{\,[x,y]\mid y\ne 0\}$ and $U_{1}=\{\,[x,y]\mid x\ne 0\}$ and
consider  the covering $\U=\{U_{0},U_{1}\}$ of $V$. We
have that $V\ssm C=U_{0}$ and $C\subset U_{1}$. 
 The bundle $L$ is described as 
\[
L=(U_{0}\times \C)\cup (U_{1}\times \C),
\]
where $([x,y],\z_{0})$ and $([x,y],\z_{1})$ are identified \iff\  $\z_{0}=x/y\cdot\z_{1}$.

Also, if we set $s=x/y$ on $U_{0}$, $s$ is a base coordinate and $y$ is a (covering) fiber coordinate of  the fibration $\rho:V\ra\P^{1}$ and,  
 if we set $t=y/x$ on $U_{1}$, $t$ is a base coordinate and $x$ is a fiber coordinate of  the fibration $\rho$.

The foliation $\F$ defines a subbundle $F_{0}$ of $TV|_{U_{0}}$ of rank $2$ and there is the Camacho-Sad action
of $F_{0}$ on $N_{V}|_{U_{0}}$. Let $\na$ be an $F_{0}$-connection for $N_{V}|_{U_{0}}$ and we compute
the connection form for $\na$ (cf. Definition \ref{defFc} and (\ref{csaction})). The connection is uniquely
determined in our case and turns out to be Hermitian.

We  compute the connection form locally and find that the expression is valid globally on $U_{0}$.
At each point of $U_{0}$, $(x,y,z)$ is a coordinate system on $M$ in a \nbd\ $\tilde W$ of the point. We set
$W=V\cap\tilde W$.
We take  $\nu=\pi(\de /\de z)$ as a \h\ frame  of  $N_V$  on $W$ and
let $\t$ be the connection  form of $\na$ with respect to  $\nu$. Since $\t$ is of type $(1,0)$, we may write
$\t=fdx+gdy$.  Thus $\na_{v_1}(\nu)=yg \nu$ and $\na_{v_2}(\nu)=f \nu$.
On the other hand,
we compute
\[
\na_{v_1}(\nu)= \pi\Big(\Big[  y \frac\de  {\de y} +z\frac\de{\de z},  \frac\de{\de z}\Big]\Big|_{W}\Big)=- \nu
\quad\text{and}\quad 
\na_{v_2}(\nu)=\pi\Big(\Big[\frac\de{\de x}, \frac\de{\de z}\Big]\Big|_{W}\Big)=0.
\]
Thus we conclude that
\[
\t=-\frac{dy}{y}=   -\de \log  |y|^2. 
\]
Note that the expression is a priori on $W$, however it is valid on the whole $U_{0}$. Moreover, this shows that $\na$ is Hermitian. In fact,  on 
 $U_0$, $[x,y]\mapsto \z_{0}=1/y$ is a frame defining a connection form $-\de \log  |y|^2$
 \wrt\ the standard Hermitian metric on $L|_{U_0}\simeq U_0\times \C$ (cf. (\ref{metric}), Remark \ref{remm}.\,2).
 In this case, we can directly verify the vanishing theorem (Corollary \ref{vanherm}) as
 \[
 c^{1}(\na)=\frac {\sqrt{-1}}{2\pi}\bp\t=-\frac {\sqrt{-1}}{2\pi}\dbarde \log  |y|^2=0.
 \]

Now, also we take on  $U_1$
the 
standard metric   and consider  $1/x$ as a
 non-vanishing section  of $N_V=L$ on  $U_1$. Then, setting $\g=\frac 1{2\pi\sqrt{-1}}$, the class $c^{1}_{\rm BC}(N_V)$ is
 represented by the cocycle (cf. Remark \ref{BC=A}.\,3)
\[
\g\hspace{.5mm}(\dbard
\log |y|^2, \dbard\log |x|^2, \log |x/y|^2)=(0,0, \g\log |x/y|^2)=-(0,0, \g\log |t|^2)
\]
and it defines the localization $c^{1}_{\rm BC}(L;\F) \in H^{1,1}_{\rm BC}(\U,U_{0})$ of $c^{1}_{\rm BC}(L)$.

Now we examine its residue $\op{Res}_{c_{\rm BC}^1}(\F, L;C)\in H_{\rm A}^{1,1}[C]^*$.
We have the following diagram, of which the square part is commutative (cf. Proposition \ref{commutativity})\,:
\[
\SelectTips{cm}{}
\xymatrix
{{}&H_{\rm BC}^{1,1}(\U,U_0)\ar[r]^-{j^{*}} \ar[d]^-{\varPsi}& H_{\rm BC}^{1,1}(V)\ar[d]^-{\varPhi}_-{\wr}\\
H_{\rm A}^{1,1}(C)^*\ar[r]^-{\iota_*}&H_{\rm A}^{1,1}[C]^*\ar[r]^-{i_*} & H_{\rm A}^{1,1}(V)^*,}
\]
where $\iota_{*}$ is the transpose of the restriction $\iota^{*}:H_{\rm A}^{1,1}[C]\ra H_{\rm A}^{1,1}(C)$. We have
$H_{\rm A}^{1,1}(C)\simeq\C$ and the composition $\iota^{*}\circ i^{*}:H_{\rm A}^{1,1}(V)\ra H_{\rm A}^{1,1}(C)$ is an \iso, however note that $H_{\rm A}^{1,1}[C]$ is infinite dimensional.

The residue $\op{Res}_{c_{\rm BC}^1}(\F, L;C)$
 is the functional that assigns to each $\dbard$-closed $(1,1)$-form $\xi$ in a \nbd\ of $C$ the value
(cf. (\ref{reshom}))
\begin{equationth}\label{res}
 -\g\int_{R_{01}}\big((\de\log  t
- \bp \log\bar t)\wedge\xi- \frac{1}{2}(\de-\bp)(\log |t|^2\cdot \xi ) \big),
\end{equationth}
where  $R_1=\{\,(t,x)\in U_{1}\mid |t|\leq \delta\,\}$  for some $\delta>0$ and  $R_{01}=-\de R_{1}$.

The canonical generator of $H_{\rm A}^{1,1}(C)=\C$ is given by
\[
\xi_{0}=\frac {\g}{\log |\l|}\hspace{.7mm} d\log x\wedge d\log\bar x.
\]
It may be thought of as representing the class in $H_{\rm A}^{1,1}(V)$ dual to $c^{1}_{\rm BC}(L)$, which is represented by $\g\dbarde\log(|x|^{2}+|y|^{2})$,
in $H^{1,1}_{\rm BC}(V)$.
Now we calculate ${\rm Res}_{c^{1}_{\rm BC}}(\F,L;C)(\xi_{0})$ (cf. \eqref{res}) and verify that it is equal to one. Since $\de\xi_0=\bar \partial \xi_0=0$, 
$\de (\log |t|^2\cdot \xi_0)=(\de \log t)  \wedge \xi_0$ and $\bar\partial (\log |t|^2  \cdot \xi_0)=\bar\partial (\log \bar t)  \wedge \xi_0$. Thus
\[
\op{Res}_{c^{1}_{\rm BC}}(\F,L;C)(\xi_0)
= \frac{\g}{2}\int_{\de R_{1}}(\de \log t -  \bar\partial\log \bar t) \wedge
\xi_{0}
=\int_{C}\xi_{0}=1.
\]

\vv

M. Corr\^ea

Departamento de Matem\'atica 

 Universidade Federal de Minas Gerais

Av. Ant\^onio Carlos 6627

30161-970 Belo Horizonte, Brazil

mauricio@mat.ufmg.br
\vv

T. Suwa

Department of Mathematics 

Hokkaido University

Sapporo 060-0810, Japan

tsuwa@sci.hokudai.ac.jp

\end{document}